\documentclass[11pt]{amsart}
\usepackage{latexsym,amscd,amssymb, graphicx, color, amsthm, bm, amsmath, tikz}  
\usepackage{young}
\usepackage[margin=1in]{geometry}
\usepackage{hyperref}
\usepackage[all,cmtip]{xy}
\usepackage{mathtools}

\numberwithin{equation}{section}

\newtheorem{theorem}{Theorem}[section]
\newtheorem{proposition}[theorem]{Proposition}
\newtheorem{corollary}[theorem]{Corollary}

\newtheorem{problem}[theorem]{Problem}

\newtheorem{remark}[theorem]{Remark}

\theoremstyle{definition}

\newcommand{\ch}{{\mathrm {ch}}}
\newcommand{\maj}{{\mathrm {maj}}}

\newcommand{\sign}{{\mathrm {sign}}}

\newcommand{\SSYT}{{\mathrm {SSYT}}}

\newcommand{\grFrob}{{\mathrm {grFrob}}}
\newcommand{\des}{{\mathrm {des}}}

\newcommand{\SYT}{{\mathrm {SYT}}}

\newcommand{\Frob}{{\mathrm {Frob}}}

\newcommand{\shape}{{\mathrm {shape}}}

\newcommand{\symm}{{\mathfrak{S}}}

\newcommand{\rank}{{\mathrm{rank}}}

\newcommand{\CC}{{\mathbb {C}}}

\newcommand{\ZZ}{{\mathbb {Z}}}
\newcommand{\PP}{{\mathbb{P}}}
\newcommand{\RR}{{\mathbb{R}}}

\newcommand{\FFF}{{\mathcal{F}}}
\newcommand{\SSS}{{\mathbb{S}}}

\newcommand{\EEE}{{\mathcal{E}}}

\newcommand{\CCC}{{\mathcal{C}}}

\newcommand{\BBB}{{\mathcal{B}}}

\newcommand{\ee}{{\mathbf {e}}}


\begin{document}

\title[Boolean product polynomials]
{Boolean product polynomials, Schur positivity,\\ and Chern plethysm}
\date{\today}

\author{Sara C. Billey}
\address
{Department of Mathematics \newline \indent
University of Washington \newline \indent
Seattle, WA, 98195-4350}
\email{billey@math.washington.edu}

\author{Brendon Rhoades}
\address
{Department of Mathematics \newline \indent
University of California, San Diego \newline \indent
La Jolla, CA, 92093-0112, USA}
\email{bprhoades@math.ucsd.edu}

\author{Vasu Tewari}
\address
{Department of Mathematics \newline \indent
University of Pennsylvania \newline \indent
Philadelphia, PA, 19104-6395, USA}
\email{vvtewari@math.upenn.edu}

\begin{abstract}
  Let $1\leq k \leq n$ and let $X_n = (x_1, \dots, x_n)$ be a list of
  $n$ variables.  The {\em Boolean product polynomial} $B_{n,k}(X_n)$
  is the product of the linear forms $\sum_{i \in S} x_i$ where $S$
  ranges over all $k$-element subsets of $\{1, 2, \dots, n\}$.  We
  prove that Boolean product polynomials are Schur positive.  We do
  this via a new method of proving Schur positivity using vector
  bundles and a symmetric function operation we call {\em Chern
    plethysm}.  This gives a geometric method for producing a vast
  array of Schur positive polynomials whose Schur positivity lacks (at
  present) a combinatorial or representation theoretic proof.  We
  relate the polynomials $B_{n,k}(X_n)$ for certain $k$ to
  other combinatorial objects including derangements, positroids,
  alternating sign matrices, and reverse flagged fillings of a
  partition shape.  We also  relate $B_{n,n-1}(X_n)$ to  
  a bigraded action of the symmetric group $\symm_n$
  on a divergence free quotient of superspace.   

\end{abstract}

\keywords{symmetric function, Schur positivity, Chern class, superspace}
\maketitle

\section{Introduction}
\label{Introduction}

The symmetric group $\symm_n$ of permutations of $[n] \coloneqq \{1, 2, \dots, n\}$ acts on the polynomial
ring $\CC[X_n] \coloneqq \CC[x_1, \dots, x_n]$ by variable permutation.
Elements of the invariant subring
\begin{equation}
\CC[X_n]^{\symm_n} \coloneqq 
\{ f(X_n) \in \CC[X_n] \,:\, w.f(X_n) = f(X_n) \text{ for all $w \in \symm_n$ } \}
\end{equation}
are called {\em symmetric polynomials}.

Symmetric polynomials are typically defined using {\bf sums of products} of the variables
$x_1, \dots, x_n$.   Examples include
the {\em power sum}, the 
{\em elementary symmetric polynomial}, and the 
{\em homogeneous symmetric polynomial} which are (respectively)
\begin{equation}
p_k(X_n) = x_1^k + \cdots + x_n^k, \quad 
e_k(X_n) = \sum_{1 \leq i_1 < \cdots < i_k \leq n} x_{i_1} \cdots x_{i_k}, \quad
h_k(X_n) = \sum_{1 \leq i_1 \leq \cdots \leq i_k \leq n} x_{i_1} \cdots x_{i_k}.
\end{equation}
Given a partition $\lambda = (\lambda_1 \geq  \dots \geq  \lambda_k > 0)$ with $k \leq n$
parts, we  have the {\em monomial symmetric polynomial}
\begin{equation}
m_{\lambda}(X_n) = 
\sum_{\substack{\text{$i_1, \dots, i_k$ distinct}}}
x_{i_1}^{\lambda_1} \cdots x_{i_k}^{\lambda_k},
\end{equation}
as well as the {\em Schur polynomial}
 $s_{\lambda}(X_n)$ whose definition is recalled in Section~\ref{Background}.
 
 Among these symmetric polynomials, the Schur polynomials are the most important.
 The set of Schur polynomials $s_{\lambda}(X_n)$ where $\lambda$ has at most $n$ parts
 forms a $\CC$-basis of
$\CC[X_n]^{\symm_n}$.
A symmetric polynomial $f(X_n)$ is {\em Schur positive} if its expansion into the Schur
basis has nonnegative integer coefficients.
Schur positive polynomials admit representation theoretic interpretations involving  
general linear and symmetric groups as well as geometric interpretations involving  cohomology rings of
Grassmannians.
A central problem in the theory of symmetric polynomials is to decide whether a given symmetric polynomial
$f(X_n)$ is Schur positive.

In addition to the sums of products described above, one can also define symmetric polynomials
using {\bf products of sums}.  For $1 \leq k \leq n$, we define the {\em Boolean product polynomial}
\begin{equation}
B_{n,k}(X_n) \coloneqq \prod_{1 \leq i_1 < \cdots < i_k \leq n}
(x_{i_1} + \cdots + x_{i_k}).
\end{equation}
For example, when $n = 4$ and $k = 2$, we have
\begin{equation*}
  B_{4,2}(X_4) = (x_1 + x_2)(x_1 + x_3)(x_1 + x_4)(x_2 + x_3)(x_2 +
  x_4)(x_3 + x_4).  
\end{equation*}
One can check $B_{n,1}=x_1x_2\cdots x_n=s_{(1^n)}(X_n)$ and
$B_{n,2}(X_n)=s_{(n-1,n-2,\ldots,1)}$ for $n\geq 2$.
We also define a {\em `total' Boolean product polynomial} $B_n(X_n)$ to be the product
of the $B_{n,k}$'s, 
\begin{equation}
B_n(X_n) \coloneqq \prod_{k = 1}^n B_{n,k}(X_n).
\end{equation}

Lou Billera provided our original inspiration for studying $B_n(X_n)$ at a BIRS workshop
in 2015.
His study of the  Boolean product polynomials was partially motivated
by the study of the {\em resonance arrangement}.
This is the  hyperplane arrangement in $\RR^n$ with hyperplanes 
$\sum_{i \in S} x_i = 0$,
where $S$ ranges over all nonempty subsets of $[n]$. 
The polynomial $B_n(X_n)$ is the defining polynomial of this arrangement.
The resonance arrangement is related to double Hurwitz numbers \cite{CJM},
quantum field theory \cite{Evans}, and certain preference rankings in psychology and economics \cite{KTT}.
Enumerating the regions of the resonance arrangement is an open problem.
In Section~\ref{Open} we present further motivation for  Boolean product polynomials.


In this paper we prove that $B_{n,k}(X_n)$ and $B_n(X_n)$ are Schur
positive (Theorem~\ref{boolean-schur-positivity}).  These results were
first announced in \cite{BBT} and presented at FPSAC 2018.  The proof
relies on the geometry of vector bundles and involves an operation on
symmetric functions and Chern roots which we call {\em Chern
  plethysm}.  Chern plethysm behaves in some ways like classical
plethsym of symmetric functions, but it is clearly a different
operation.  Our Schur positivity results follow from earlier results
of Pragacz \cite{PAlain} and Fulton-Lazarsfeld \cite{FL} on numerical
positivity in vector bundles over smooth varieties.  This method
provides a vast array of Schur positive polynomials coming from
products of sums, the polynomials $B_{n,k}(X_n)$ and $B_n(X_n)$ among
them.

There is a great deal of combinatorial and representation theoretic
machinery available for understanding the Schur positivity of sums of
products.  Schur positive products of sums are much less understood.
Despite their innocuous definitions, there is no known combinatorial
proof of the Schur positivity of $B_{n,k}(X_n)$ or $B_n(X_n)$, nor is
there a realization of these polynomials as the Weyl character of an
explicit polynomial representation of $GL_n$ for all $k,n$.  It is the
hope of the authors that this paper will motivate further study into
Schur positive products of sums.

Toward developing combinatorial interpretations and related
representation theory for $B_{n,k}(X_n)$, we study the special cases
$B_{n,2}(X_n)$ and $B_{n,n-1}(X_n)$ in more detail.  The polynomials $B_{n,2}(X_n)$
are the highest homogeneous component of certain products famously
studied by Alain Lascoux \cite{Lascoux}, namely
\[\prod_{1\leq i< j\leq n} (1+ x_i +x_j).\]
He showed these polynomials are Schur positive using vector bundles,
inspiring the work of Pragacz. It is nontrivial to show the
coefficients in his expansion are nonnegative integers.  Lascoux's
work was also the motivation for the highly influential work of
Gessel-Viennot on lattice paths and binomial determinants
\cite{GV}. In Theorem~\ref{lascoux-schur-expansion}, we give the first
purely combinatorial interpretation for all of the Schur expansion
coefficients in Lascoux's product.  Surprisingly, the sum of the
coefficients in the Schur expansion of this formula, is equal to the
number of alternating sign matrices of size $n$ or equivalently the
number of totally symmetric self-complementary plane partitions of
$2n$ (Corollary~\ref{cor:total_good_fillings}).

In Section~\ref{Analogue}, we introduce a $q$-analog of $B_{n,n-1}$.
At $q=0$, this polynomial is the Frobenius characteristic of the
regular representation of $\symm_n$.  At $q=-1$, we get back the
Boolean product polynomial.  By work of D\'esarm\'enien-Wachs and
Reiner-Webb, we have a combinatorial interpretation of the Schur
expansion $B_{n,n-1}$.  Furthermore, $B_{n,n-1}$ is the character of a
direct sum of $\mathrm{Lie}_\lambda$ representations which has a basis
given by derangements in $\symm_n$.  At $q=1$, this family of
symmetric functions is related to an $\symm_n$-action on positroids.
We show the $q$-analog of $B_{n,n-1}$ is the graded Frobenius
characteristic of a bigraded $\symm_n$-action on a divergence free
quotient of superspace related to the classical coinvariant algebras,
which we call $R_n$.  Following the work of
Haglund-Rhoades-Shimozono\cite{HRS}, we extend our construction to the
context of ordered set partitions and beyond.

The remainder of the paper is structured as follows.  In {\bf
  Section~\ref{Background}}, we review the combinatorics and
representation theory of Schur polynomials.  In {\bf
  Section~\ref{Chern}}, we introduce Chern plethysm and explain the
relevance of the work of Pragacz to Schur positivity.  In {\bf
  Section~\ref{sec:bn2}}, we introduce the reverse flagged fillings in
relation to Lascoux's formula in our study of Boolean product
polynomials of the form $B_{n,2}$.  In {\bf Section~\ref{Analogue}},
we connect $B_{n,k}$ to combinatorics and representation theory in the
special case $k = n -1$.  In particular, we introduce a
$q$-deformation of $B_{n,n-1}$ and relate it to a quotient of
superspace.  We close in {\bf Section~\ref{Open}} with some open
problems.

\section{Background}
\label{Background}

We provide a brief introduction to the background and notation we are
assuming in this paper.  Further details on Schur polynomials and the
representation theory of $\symm_n$ and $GL_n$ can be found in 
\cite{Fulton-Harris}.

\subsection{Partitions, tableaux, and Schur polynomials}
A {\em partition of $n$} is a weakly decreasing sequence
$\lambda = (\lambda_1 \geq \cdots \geq \lambda_k)$ of positive
integers such that $\lambda_1 + \cdots + \lambda_k = n$. We write
$\lambda \vdash n$ or $|\lambda| = n$ to mean that $\lambda$ is a
partition of $n$.  We also write $\ell(\lambda) = k$ for the number of
parts of $\lambda$.  The {\em Ferrers diagram} of a partition
$\lambda$ consists of $\lambda_i$ left justified boxes in row $i$.
The Ferrers diagram of $(3,3,1) \vdash 7$ is shown on the left in
Figure~\ref{fig:1}.  We identify partitions with their Ferrers diagrams
throughout.

\begin{figure}
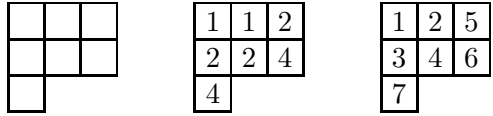

\begin{equation*}
\begin{young}
 & & \cr
 & & \cr
 \cr
\end{young} \hspace{0.4in}
\begin{young}
 1& 1 & 2 \cr
 2& 2 & 4 \cr
 4 \cr
\end{young} \hspace{0.4in}
\begin{young}
 1& 2 & 5 \cr
 3 & 4 & 6  \cr
 7 \cr
\end{young} 
\end{equation*}
\caption{The Ferrers diagram of $(3,3,1)$ along with a semistandard and
  a standard Young tableau of that shape. \label{fig:1}}
\end{figure}

If $\lambda$ is a partition, a {\em semistandard tableau} $T$ of shape
$\lambda$ is a filling $T: \lambda \rightarrow \ZZ_{> 0}$ of the
boxes of $\lambda$ with positive integers such that the entries
increase weakly across rows and strictly down columns. A semistandard
tableau of shape $(3,3,1)$ is shown in the middle of
Figure~\ref{fig:1}.  Let $\SSYT(\lambda, \leq n)$ be the family of all
semistandard tableaux of shape $\lambda$ with entries $\leq n$.  A
semistandard tableau is {\em standard} if its entries are
$1, 2, \dots, |\lambda|$.  A standard tableau, or standard Young
tableau, is shown on the right in Figure~\ref{fig:1}.

Given a semistandard tableau $T$, define a monomial $x^T \coloneqq x_1^{m_1(T)} x_2^{m_2(T)} \cdots $, where
$m_i(T)$ is the multiplicity of $i$ as an entry in $T$.
In the above example, we have $x^T = x_1^2 x_2^3 x_4^2$.  The {\em Schur polynomial}
$s_{\lambda}(X_n)$ is the corresponding generating function
\begin{equation}
s_{\lambda}(X_n) \coloneqq \sum_{T \in \SSYT(\lambda, \leq n)} x^T.
\end{equation}
Observe that $s_{\lambda}(X_n) = 0$ whenever $\ell(\lambda) > n$.

An alternative formula for the Schur polynomial can be given as a
ratio of determinants as follows.  Let
$\Delta_n \coloneqq \prod_{1 \leq i < j \leq n} (x_i - x_j) =\sum_{w
  \in \symm_n} \sign(w) \cdot (w.x_1^{n-1}x_2^{n-2}\cdots x_n^{0})$
  be the Vandermonde determinant.  Given any polynomial
  $f \in \CC[X_n]$, define a symmetric polynomial $A_n(f)$ by
\begin{equation}\label{eq:anti.symm} A_n(f) \coloneqq \frac{1}{\Delta_n} \sum_{w \in
\symm_n} \sign(w) \cdot (w.f).
\end{equation}
Let $\mu = (\mu_1 \geq \cdots \geq \mu_n \geq 0)$ be a partition with
$\leq n$ parts.  The Schur polynomial $s_{\mu}(X_n)$ can also be
obtained applying $A_n$ to the monomial
$x_1^{\mu_1 + n - 1} x_2^{\mu_2 + n - 2} \cdots x_n^{\mu_n}$:
\begin{equation} s_{\mu}(X_n) = A_n(x_1^{\mu_1 + n - 1} x_2^{\mu_2 + n
- 2} \cdots x_n^{\mu_n}).
\end{equation}

\subsection{$GL_n$-modules and Weyl characters}
Let $GL_n$ be the group of invertible $n \times n$ complex matrices, and
let $W$ be a finite-dimensional representation of $GL_n$ with underlying group homomorphism
$\rho: GL_n \rightarrow GL(W)$.
The {\em Weyl character} of $\rho$ is the function
$\ch_{\rho}: (\CC^{\times})^n \rightarrow \CC$ defined by
\begin{equation}
\ch_{\rho}(x_1, \dots, x_n) \coloneqq \mathrm{trace}( \rho(\mathrm{diag}(x_1, \dots, x_n)) )
\end{equation}
which sends an $n$-tuple $(x_1, \dots, x_n)$ of nonzero complex numbers to the trace of the 
diagonal matrix $\mathrm{diag}(x_1, \dots, x_n) \in GL_n$ as an operator on $W$.
The function $\ch_{\rho}$ satisfies 
$\ch_{\rho}(x_1, \dots, x_n) = \ch_{\rho}(x_{w(1)}, \dots, x_{w(n)})$ for any $w \in \symm_n$.

A representation $\rho: GL_n \rightarrow GL(W)$ is {\em polynomial} if $W$ is finite-dimensional and there 
exists a basis $\BBB$ of $W$ such that the entries of the matrix $[\rho(g)]_{\BBB}$ representing $\rho(g)$
are polynomial functions of the entries of $g \in GL_n$.  
This property is independent of the choice of basis $\BBB$.
In this case, the Weyl character
$\ch_{\rho} \in \CC[X_n]^{\symm_n}$ is a symmetric polynomial.
We will only consider polynomial representations in this paper.  

Let $\lambda \vdash d$ and let $T$ be a standard Young tableau with $d$ boxes. Let
$R_T, C_T \subseteq \symm_d$ be the subgroups of permutations in $\symm_d$ which stabilize the rows
and columns of $T$, respectively.  
For the standard tableau of shape $(3,3,1)$ shown in
Figure~\ref{fig:1}, we have
$R_T = \symm_{\{1,2,5\}} \times \symm_{\{3,4,6\}} \times \symm_{\{7\}}$
and
$C_T = \symm_{\{1,3,7\}} \times \symm_{\{2,4\}} \times \symm_{\{5,6\}}$.
The {\em Young idempotent} $\varepsilon_{\lambda} \in \CC[\symm_d]$ is the group algebra 
element
\begin{equation}
\varepsilon_{\lambda} \coloneqq \sum_{w \in R_T} \sum_{u \in C_T} \sign(u) \cdot uw \in \CC[\symm_d].
\end{equation}
Strictly speaking, the group algebra element $\varepsilon_{\lambda}$ depends on the standard tableau $T$,
but this dependence is only up to conjugacy by an element of $\symm_d$ and will be ignored.

Let $V = \CC^n$ be the standard $n$-dimensional complex vector space.
The symmetric group $\symm_d$ acts on the $d$-fold tensor product $V \otimes \cdots \otimes V$ 
{\em on the right} by permuting tensor factors:
\begin{equation}
(v_1 \otimes \cdots \otimes v_d).w \coloneqq v_{w^{-1}(1)} \otimes \cdots \otimes v_{w^{-1}(d)}, \quad v_i \in V, w \in \symm_d.
\end{equation}
By linear extension we have an action of the group algebra $\CC[\symm_d]$ on $V \otimes \cdots \otimes V$.
If $\lambda$ is a partition, 
the {\em Schur functor} $\SSS^{\lambda}(\cdot)$ attached to $\lambda$ is defined by
\begin{equation}
\SSS^{\lambda}(V) \coloneqq (V \otimes \cdots \otimes V) \varepsilon_{\lambda}.
\end{equation}
We have $\SSS^{\lambda}(V) = 0$ whenever $\ell(\lambda) > \dim(V)$.
Two special cases are of interest. If $\lambda = (d)$ is a single row then
$\SSS^{(d)}(V) = Sym^d V$ is the $d^{th}$ symmetric power. If $\lambda = (1^d)$ is a single 
column then $\SSS^{(1^d)}(V) = \wedge^d V$ is the $d^{th}$ exterior power.

The group $GL_n = GL(V)$ acts on $V \otimes \cdots \otimes V$ {\em on the left} by the diagonal action
\begin{equation}
g.(v_1 \otimes \cdots \otimes v_d) \coloneqq (g.v_1) \otimes \cdots \otimes (g.v_d), \quad
v_i \in V,\ g \in GL_n.
\end{equation}
This commutes with the action of $\CC[\symm_d]$ and so turns $\SSS^{\lambda}(V)$ into a $GL_n$-module.
We quote the following bromides of $GL_n$-representation theory.
\begin{enumerate}
\item  If $\ell(\lambda) \leq n$,
the module $\SSS^{\lambda}(V)$ is an irreducible polynomial representation of $GL_n$ with Weyl
character given by the Schur polynomial $s_{\lambda}(X_n)$.
\item The modules $\SSS^{\lambda}(V)$ for $\ell(\lambda) \leq n$ form a complete list of the nonisomorphic
irreducible
polynomial representations of $GL_n$.
\item Any polynomial $GL_n$-representation may be expressed
uniquely as a direct sum of the modules $\SSS^{\lambda}(V)$.
\end{enumerate}
A symmetric polynomial $f(X_n) \in \CC[X_n]^{\symm_n}$ is therefore Schur positive if and only if
it is the Weyl character of a polynomial representation of $GL_n$.

\subsection{$\symm_n$-modules and Frobenius image}
Let $X = (x_1, x_2, \dots )$ be an infinite list of variables.
For $d > 0$, the {\em power sum symmetric function} is 
$p_d(X) = x_1^d + x_2^d + \cdots$; this is an element of the ring $\CC[[X]]$ of formal power series in $X$.
The {\em ring of symmetric functions} 
\begin{equation}
\Lambda \coloneqq \CC[p_1(X), p_2(X), \dots ]
\end{equation}
is the $\CC$-subalgebra of $\CC[[X]]$ freely generated by the $p_d(X)$.
The algebra $\Lambda$ is graded; let $\Lambda_n$ be the subspace of homogeneous degree $n$ 
so that $\Lambda = \bigoplus_{n \geq 0} \Lambda_n$.

For $d \geq 0$ the {\em elementary symmetric function} is
$e_d(X) \coloneqq \sum_{1 \leq i_1 < i_2 < \cdots < i_d}  x_{i_1} x_{i_2} \cdots x_{i_d}$
and the {\em homogeneous symmetric function} is
$h_d(X) \coloneqq \sum_{1 \leq i_1 \leq i_2 \leq \cdots \leq i_d}  x_{i_1} x_{i_2} \cdots x_{i_d}$.
If $\lambda = (\lambda_1, \lambda_2, \dots )$ is a partition, we define
\begin{equation*}
p_{\lambda}(X) \coloneqq p_{\lambda_1}(X) p_{\lambda_2}(X) \cdots, \quad
e_{\lambda}(X) \coloneqq e_{\lambda_1}(X) e_{\lambda_2}(X) \cdots,
\quad \text{ and }\  
h_{\lambda}(X) \coloneqq h_{\lambda_1}(X) h_{\lambda_2}(X) \cdots
\end{equation*}

Given a partition $\lambda$, the {\em Schur function} $s_{\lambda}(X) \in \Lambda$ is the formal power
series $s_{\lambda}(X) \coloneqq \sum_T x^T$, where $T$ ranges over all semistandard tableaux of shape $\lambda$.
The set $\{ s_{\lambda}(X) \,:\, \lambda \vdash n \}$ forms a basis
for $\Lambda_n$ as a $\CC$-vector space.

The irreducible $\symm_n$-modules are naturally indexed by partitions
of $n$.  If $\lambda \vdash n$, let
$S^{\lambda} = \CC[\symm_n] \varepsilon_{\lambda}$ be the
corresponding irreducible module.  If $V$ is any $\symm_n$-module, there
exist unique $m_{\lambda} \geq 0$ such that
$V \cong \bigoplus_{\lambda \vdash n} m_{\lambda} S^{\lambda}$.  The
{\em Frobenius image} $\Frob(V) \in \Lambda_n$ is given by
$\Frob(V) \coloneqq \sum_{\lambda \vdash n} m_{\lambda}
s_{\lambda}(X)$.  A symmetric function $F(X) \in \Lambda_n$ is
therefore Schur positive if and only if $F(X)$ is the Frobenius image
of some $\symm_n$-module $V$.

Let $V$ be an $\symm_n$-module, and let $W$ be an $\symm_m$-module. 
The tensor product $V \otimes W$ is naturally an $\symm_n \times \symm_m$-module.
We have an embedding $\symm_n \times \symm_m \subseteq \symm_{n+m}$ by letting $\symm_n$ act 
on the first $n$ letters and letting $\symm_m$ act on the last $m$ letters.
The {\em induction product} of $V$ and $W$ is
\begin{equation}
V \circ W \coloneqq (V \otimes W) \uparrow_{\symm_n \times \symm_m}^{\symm_{n+m}}.
\end{equation}
The effect of induction product on Frobenius image is
\begin{equation}
\Frob(V \circ W) = \Frob(V) \cdot \Frob(W).
\end{equation}

Suppose $V = \bigoplus_{i \geq 0} V_i$ is a graded $\symm_n$-module such that each
component $V_i$ is finite-dimensional. The {\em graded Frobenius image} is
\begin{equation}
\grFrob(V;t) := \sum_{i \geq 0} \Frob(V_i) \cdot t^i.
\end{equation}
More generally, if $V = \bigoplus_{i, j \geq 0} V_{i,j}$ is a bigraded $\symm_n$-module with 
each component $V_{i,j}$ finite-dimensional, the {\em bigraded Frobenius image} is
\begin{equation}
\grFrob(V;t,q) := \sum_{i,j \geq 0} \Frob(V_{i,j}) \cdot t^i q^j.
\end{equation}

\subsection{The coinvariant algebra}

Let $\CC[X_n]^{\symm_n}_+ \subseteq \CC[X_n]$ be the vector space of 
symmetric polynomials with vanishing constant term and let 
$\langle \CC[X_n]^{\symm_n}_+ \rangle \subseteq \CC[X_n]$ be the ideal generated by this space.
We have the generating set 
$\langle \CC[X_n]^{\symm_n}_+ \rangle = \langle e_1(X_n), e_2(X_n), \dots, e_n(X_n) \rangle$.
The {\em coinvariant ring} is the graded $\symm_n$-module
\begin{equation}
\CC[X_n]/\langle \CC[X_n]^{\symm_n}_+ \rangle  = \CC[X_n]/ \langle e_1(X_n), e_2(X_n), \dots, e_n(X_n) \rangle.
\end{equation}

As an ungraded $\symm_n$-module, the coinvariant ring
is isomorphic to the regular representation $\CC[\symm_n]$.
The graded $\symm_n$-module structure of the coinvariant ring is governed by the combinatorics
of tableaux.

 Let $\SYT(n)$ denote the set of all standard Young
tableaux with $n$ boxes (of any partition shape).  We let $\shape(T)$ be
the partition obtained by erasing the entries of the tableau $T$.  If
$T \in \SYT(n)$, an element $1 \leq i \leq n-1$ is a {\em descent} if
$i+1$ appears in a strictly lower row than $i$ in $T$.  Otherwise, $i$
is an {\em ascent} of $T$.  The {\em major index} $\maj(T)$ is the sum
of the descents in $T$.  For example, the standard tableau on the
right in Figure~\ref{fig:1} has descents at $2,5,$ and $6$ so its
major index is $13$.

The following graded Frobenius image of the coinvariant ring is due to
Lusztig (unpublished) and Stanley \cite{Stanley}.  


\begin{theorem}[Lusztig--Stanley, {\cite[Prop.~4.11]{Stanley}}]\label{thm:LusztigStanley}
  For any positive integer $n$, we have
$$  \grFrob(\CC[X_n]/\langle \CC[X_n]^{\symm_n}_+ \rangle ; t) 
= \sum_{T \in \SYT(n)} t^{\maj(T)} \cdot s_{\shape(T)}(X).$$
\end{theorem}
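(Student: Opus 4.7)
The plan is to deduce the graded Frobenius identity from three ingredients: Chevalley's freeness theorem for the coinvariant ring, the principal specialization of Schur polynomials, and Stanley's major-index version of the hook length formula.

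I first invoke Chevalley--Shephard--Todd to split the polynomial ring as a tensor product of graded $\symm_n$-modules
$$\CC[X_n] \;\cong\; \CC[X_n]^{\symm_n} \otimes_{\CC} R_n, \qquad R_n \coloneqq \CC[X_n]/\langle \CC[X_n]^{\symm_n}_+\rangle,$$
with $\symm_n$ acting trivially on the first factor.  Because $\CC[X_n]^{\symm_n}$ is a polynomial algebra on $e_1(X_n), \ldots, e_n(X_n)$ with Hilbert series $\prod_{i=1}^n (1-t^i)^{-1}$, taking graded Frobenius images yields
$$\grFrob(R_n; t) \;=\; \prod_{i=1}^n (1-t^i)\cdot \grFrob(\CC[X_n]; t).$$

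For the second factor, I compute the trace of a permutation $w$ of cycle type $\mu$ on $\CC[X_n]_d$ as the coefficient of $t^d$ in $\prod_i (1-t^{\mu_i})^{-1}$.  Applying $\Frob$ and summing over $d$ produces the plethystic expression
$$\grFrob(\CC[X_n]; t) \;=\; \sum_{\mu \vdash n} \frac{p_\mu(X)}{z_\mu \prod_i (1-t^{\mu_i})} \;=\; h_n[X/(1-t)].$$
The Cauchy identity applied to the alphabet $X\cdot(1+t+t^2+\cdots)$, combined with the standard principal specialization $s_\lambda(1,t,t^2,\ldots)=t^{n(\lambda)}/\prod_{c \in \lambda}(1-t^{h(c)})$, then gives
$$h_n[X/(1-t)]\;=\;\sum_{\lambda \vdash n} \frac{t^{n(\lambda)}}{\prod_{c\in\lambda}(1-t^{h(c)})}\,s_\lambda(X).$$
Multiplying by $\prod_{i=1}^n (1-t^i)=(1-t)^n [n]_t!$ and cancelling the $(1-t)^n$ against the corresponding factor in $\prod_{c\in\lambda}(1-t^{h(c)})=(1-t)^n \prod_{c\in\lambda}[h(c)]_t$ rewrites the right-hand side as
$$\grFrob(R_n; t) \;=\; \sum_{\lambda \vdash n} \frac{t^{n(\lambda)}\,[n]_t!}{\prod_{c\in\lambda}[h(c)]_t}\, s_\lambda(X).$$

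To conclude, I invoke Stanley's $q$-analog of the hook length formula,
$$\sum_{T \in \SYT(\lambda)} t^{\maj(T)} \;=\; \frac{t^{n(\lambda)}\,[n]_t!}{\prod_{c \in \lambda}[h(c)]_t},$$
which identifies the coefficient of $s_\lambda(X)$ above with the partition-shape contribution to $\sum_{T \in \SYT(n)} t^{\maj(T)}\,s_{\shape(T)}(X)$, completing the proof.  The main obstacle is this final combinatorial identity, since everything preceding it is a formal character computation inside $\Lambda$.  I would establish it either via the RSK correspondence combined with Stanley's generating-function identity for column-strict tableaux (which recovers the same principal specialization already used above, keeping the ingredients uniform) or by the direct descent-basis bijection of Garsia--Stanton, which has the pleasant byproduct of exhibiting an explicit monomial basis of $R_n$ indexed by $\symm_n$ with degree statistic $\maj$.
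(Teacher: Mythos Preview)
Your argument is correct, but note that the paper does not actually prove this theorem: it is stated in Section~\ref{Background} as background, attributed to Lusztig and Stanley with a citation to \cite[Prop.~4.11]{Stanley}, and then invoked later in the proof of Theorem~\ref{bigraded-frobenius-identification}. So there is no ``paper's own proof'' to compare against.

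That said, your outline is one of the standard derivations of the Lusztig--Stanley formula and each step is sound. The Chevalley--Shephard--Todd factorization gives the $\prod_i(1-t^i)$ correction, the graded character computation on $\CC[X_n]$ yields $h_n[X/(1-t)]$ (this is exactly the formula the paper records as Equation~\eqref{polynomial-graded-frobenius}), the Cauchy expansion together with the principal specialization of $s_\lambda$ is correct, and the final identification of the coefficient of $s_\lambda$ with $\sum_{T\in\SYT(\lambda)} t^{\maj(T)}$ is Stanley's $q$-hook length formula. Your remark that the last identity is the only nontrivial combinatorial input is accurate; both routes you suggest for it (RSK plus the principal-specialization identity, or the Garsia--Stanton descent basis) are standard and would close the argument.
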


\section{Chern plethysm and Schur positivity}
\label{Chern}

\subsection{Chern classes and Chern roots}
We describe basic properties of vector bundles and their Chern roots
from a combinatorial point of view; see
\cite{FultonIntersection}  for the relevant geometry.

Let $X$ be a smooth complex projective variety, and let $H^{\bullet}(X)$ be the singular 
cohomology of $X$ with integer coefficients. Let
$\EEE \twoheadrightarrow X$ be a complex vector 
bundle over $X$ of rank $n$. For any point $p \in X$, the fiber $\EEE_p$ of $\EEE$ over $p$
is an $n$-dimensional complex vector space.  For $1 \leq i \leq r$ we have the {\em Chern class}
$c_i(\EEE) \in H^{2i}(X)$.  
The sum of these Chern classes inside $H^{\bullet}(X)$ is the {\em total Chern class}
$c_{\bullet}(\EEE) \coloneqq 1 + c_1(\EEE)  + c_2(\EEE)  + \cdots + c_n(\EEE) $.

If $\EEE \twoheadrightarrow X$ and $\FFF \twoheadrightarrow X$ are two vector bundles, we can form their 
{\em direct sum bundle} (or {\em Whitney sum}) by the rule 
$(\EEE \oplus \FFF)_p \coloneqq \EEE_p \oplus \FFF_p$ for all $p \in X$.
The ranks of these bundles are related by  $\rank(\EEE \oplus \FFF) = \rank(\EEE) + \rank(\FFF)$.
The {\em Whitney sum formula} 
\cite[Thm. 3.2e]{FultonIntersection} 
states that the corresponding total Chern classes are related by 
$c_{\bullet}(\EEE \oplus \FFF) = c_{\bullet}(\EEE) \cdot c_{\bullet}(\FFF)$.

Recall that a {\em line bundle} is a vector bundle of rank $1$.
Let $\EEE \twoheadrightarrow X$ be a rank $n$ vector bundle.
If we can express $\EEE$ as a direct sum of $n$ line bundles, i.e. $\EEE = \ell_1 \oplus \cdots \oplus \ell_n$,
then the Whitney sum formula guarantees that the total Chern class of $\EEE$ factors as
\begin{equation}
\label{chern-split}
c_{\bullet}(\EEE) = (1 + x_1) \cdots (1 + x_n) 
\end{equation}
where 
$x_i = c_1(\ell_i) \in H^2(X)$.
Despite the fact $\EEE$ is not necessarily a direct sum of line bundles, by the {\em splitting principle}
\cite[Rmk. 3.2.3]{FultonIntersection}
there exist, unique up to permutation, elements 
$x_1, \dots, x_n \in H^2(X)$ such that the factorization~\eqref{chern-split} holds.
The elements $x_1, \dots, x_n$ of the second cohomology group of $X$ are the
{\em Chern roots} of the bundle $\EEE$.

\subsection{Chern plethysm}  Let $\EEE \twoheadrightarrow X$ be a rank $n$ complex vector bundle 
over a smooth algebraic
variety, and let $x_1, \dots, x_n$ be the Chern roots of $\EEE$.  If $F \in \Lambda$ is a symmetric
function, we define the {\em Chern plethysm} $F(\EEE)$ to be the result of plugging the Chern roots
$x_1, \dots, x_n$ of 
$\EEE$ into $n$ of the arguments of $F$, and setting all other arguments of $F$ equal to zero.
Informally, the expression $F(\EEE)$ evaluates $F$ at the Chern roots of $\EEE$.
If $F \in \Lambda_d$ is homogeneous of degree $d$, then $F(\EEE) = F(x_1, \dots, x_n) \in H^{2d}(X)$ is a polynomial
of degree $d$ in the $x_i$; the degree $d$ is independent of the rank of $\EEE$.

Performing fiberwise operations on vector bundles induces linear changes in their Chern roots.
We give three examples of how this applies to Chern plethysm.
Let $\EEE$ be a vector bundle with Chern roots $x_1, \dots, x_n$ and let $\FFF$ be a vector bundle with
Chern roots $y_1, \dots, y_m$.  

\begin{itemize}
\item
By the Whitney sum formula, the Chern roots of the direct sum
$\EEE \oplus \FFF$ are the multiset union of $\{x_1, \dots, x_n\}$ and $\{y_1, \dots, y_m\}$ so that
\begin{equation}
F(\EEE \oplus \FFF) = F(x_1, \dots, x_n, y_1, \dots, y_m).
\end{equation}
\item
The tensor product bundle $\EEE \otimes \FFF$ is defined by
$(\EEE \otimes \FFF)_p \coloneqq \EEE_p \otimes \FFF_p$ for all $p \in X$.
The Chern roots of $\EEE \otimes \FFF$ are the multiset of sums $x_i + y_j$ where $1 \leq i \leq n$ and
$1 \leq j \leq m$ so that
\begin{equation}
F(\EEE \otimes \FFF) = F( \overbrace{\dots, \, x_i + y_j, \, \dots }^{1 \leq i \leq n, \, \, 1 \leq j \leq m}).
\end{equation}
Since $F$ is symmetric, the ordering of these arguments is immaterial.
\item
Let $\lambda$ be a partition. We may apply the Schur functor $\SSS^{\lambda}$
to the bundle $\EEE$ to obtain a new bundle $\SSS^{\lambda}(\EEE)$ with fibers
$\SSS^{\lambda}(\EEE)_p \coloneqq \SSS^{\lambda}(\EEE_p)$.  The Chern roots of $\SSS^{\lambda}(\EEE)$
are the multiset of sums 
$\sum_{\square \in \lambda} x_{T(\square)}$ where $T$ varies over $\SSYT(\lambda, \leq n)$ so that
\begin{equation}
F(\SSS^{\lambda}(\EEE)) = 
F( \overbrace{\dots, \, \sum_{\square \in \lambda} x_{T(\square)},  \, \dots}^{T \in \SSYT(\lambda, \leq n)} )
\end{equation}
For example, if $\lambda = (2,1)$ and $n = 3$, the elements of $\SSYT(\lambda, \leq n)$ are 
\begin{equation*}
\begin{young}
1 & 1 \cr
2
\end{young} \hspace{0.2in}
\begin{young}
1 & 2 \cr
2
\end{young} \hspace{0.2in}
\begin{young}
1 & 1 \cr
3
\end{young} \hspace{0.2in}
\begin{young}
1 & 2 \cr
3
\end{young} \hspace{0.2in}
\begin{young}
1 & 3 \cr
2
\end{young} \hspace{0.2in}
\begin{young}
2 & 2 \cr
3
\end{young} \hspace{0.2in}
\begin{young}
1 & 3 \cr
3
\end{young} \hspace{0.2in}
\begin{young}
2 & 2 \cr
3
\end{young}
\end{equation*}
and $F(\SSS^{\lambda}(\EEE))$ is the expression
\begin{equation*}
F(2x_1 + x_2, x_1 + 2x_2, 2x_1 + x_3, x_1 + x_2 + x_3, x_1 + x_2 + x_3, 2 x_2 + x_3, x_1 + 2 x_3, 2 x_2 + x_3).
\end{equation*}
As before, the symmetry of $F$ makes the order of substitution irrelevant.
\end{itemize}

If $F, G \in \Lambda$ are any symmetric functions and $\alpha, \beta \in \CC$ are scalars, we have 
the laws of polynomial evaluation
\begin{equation}
\label{chern-plethysm-relations}
\begin{cases}
(F \cdot G)(\EEE) = F(\EEE) \cdot G(\EEE), \\
(\alpha F + \beta G)(\EEE) = \alpha F(\EEE) + \beta G(\EEE), \\
\alpha(\EEE) = \alpha
\end{cases}
\end{equation}
for any vector bundle $\EEE$.

\begin{remark}
The   reader may worry that, since $F(\EEE)$ lies in the cohomology ring $H^{\bullet}(X)$ of the base 
space $X$ of the bundle $\EEE$, relations in $H^{\bullet}(X)$ may preclude the use of Chern plethysm of 
proving that polynomials in the ring $\CC[x_1, \dots, x_n]^{\symm_n}$ are Schur positive. Fortunately,
the base space $X$ may be chosen so that the Chern roots $x_1, \dots, x_n$ are algebraically independent.
For example, one can take $X$ to be the $n$-fold product $\PP^{\infty} \times \cdots \times \PP^{\infty}$
of infinite-dimensional complex projective space with itself and let $\EEE = \ell_1 \oplus \cdots \oplus \ell_n$
be the direct sum of the tautological line bundles over the $n$ factors of $X$. 
 The Chern roots of $\EEE$ are the variables
$x_1, \dots, x_n$ in the  presentation $H^{\bullet}(X) = \ZZ[x_1, \dots, x_n]$.
For this reason, there is no harm done in thinking of $F(\EEE)$ as an honest symmetric polynomial
in $\CC[x_1, \dots, x_n]^{\symm_n}$.
\end{remark}

\subsection{Comparison with classical plethysm} Given a symmetric function $F$ and any rational
function $E = E(t_1, t_2, \dots )$ in a countable set of variables, there is a classical notion of plethysm
$F[E]$.  The quantity $F[E]$ is determined by imposing the same relations as \eqref{chern-plethysm-relations},
i.e.
\begin{equation}
\label{plethysm-relations}
\begin{cases}
(F \cdot G)[E] = F[E] \cdot G[E], \\
(\alpha F + \beta G)[E] = \alpha F[E] + \beta G[E], \\
\alpha[E] = \alpha
\end{cases}
\end{equation}
for all $F, G \in \Lambda$ and $\alpha, \beta \in \CC$ 
together with the condition
\begin{equation}
p_k[E] = p_k[E(t_1, t_2, \dots )] \coloneqq E(t_1^k, t_2^k, \dots ), \quad k \geq 1.
\end{equation}
Since the power sums $p_1, p_2, \dots $ freely generate $\Lambda$ as a $\CC$-algebra, this defines $F[E]$ uniquely.
For more information on classical plethysm, see \cite{qtbook}.

Let us compare the two notions of plethysm $F(\EEE)$ and $F[E]$.
For any bundle $\EEE$, the degree of the polynomial $F(\EEE)$ equals the degree $\deg(F)$ of $F$.
However, if $E$ is a polynomial (or formal power series) of degree $e$, the degree of $F[E]$
is $e \cdot \deg(F)$. 

If $x_1, \dots, x_n$ are the Chern roots of $\EEE$ we have the relation
\begin{equation}
F(\EEE) = F[x_1 + \cdots + x_n] = F[X_n]
\end{equation}
for any symmetric function $F$,
where we adopt the plethystic shorthand $X_n = x_1 + \dots + x_n$ for a sum over an alphabet of $n$ variables.
The direct sum operation on vector bundles corresponds to classical plethystic sum in the sense that if $\FFF$ 
is another vector bundle with Chern roots $y_1, \dots, y_m$ then 
\begin{equation}
F(\EEE \oplus \FFF) = F[x_1 + \cdots + x_n + y_1 + \cdots + y_m] = F[X_n + Y_m].
\end{equation}
However, there is no natural interpretation of $F(\EEE \otimes \FFF)$ 
or $F(\SSS^{\lambda}(\EEE))$ in terms of classical plethysm.
On the other hand, there does not seem to be a natural interpretation of expressions like 
$F[X_n \cdot Y_m] = F(\dots, x_i y_j, \dots )$ in terms of Chern plethysm.

Classical plethystic calculus is among the most powerful tools in symmetric function theory (see e.g. \cite{CM}).
In this paper we will use geometric results to prove the Schur positivity of polynomials coming 
from Chern plethysm. It is our hope that Chern plethystic calculus will prove useful in the future.

\subsection{Chern plethysm and Schur positivity}
We have the following positivity result of Pragacz, stated in the language of Chern plethysm.

\begin{theorem}
\label{pragacz-theorem}
(Pragacz \cite[Cor. 7.2]{PSymmetric}, see also \cite[p. 34]{PAlain})
Let $\EEE_1, \dots, \EEE_k$ be vector bundles and let $\lambda, \mu^{(1)}, \dots, \mu^{(k)}$ be partitions.
There exist nonnegative integers $c^{\lambda, \mu^{(1)}, \dots, \mu^{(k)}}_{\nu^{(1)}, \dots, \nu^{(k)}}$ so that 
\begin{equation*}
s_{\lambda}(\SSS^{\mu^{(1)}}(\EEE_1) \otimes \cdots \otimes \SSS^{\mu^{(k)}}(\EEE_k)) =
\sum_{\nu^{(1)}, \dots, \nu^{(k)}}
c^{\lambda, \mu^{(1)}, \dots, \mu^{(k)}}_{\nu^{(1)}, \dots, \nu^{(k)}} \cdot
 s_{\nu^{(1)}}(\EEE_1) \cdots s_{\nu^{(k)}}(\EEE_k).
\end{equation*}
\end{theorem}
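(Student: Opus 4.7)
\smallskip
\noindent\textbf{Proof proposal.} The plan is to translate the Chern plethysm identity into a statement about characters of polynomial representations of a product of general linear groups, and then invoke complete reducibility. By the splitting principle, together with the universal example in the remark, it suffices to treat the case where each $\EEE_i = \ell^{(i)}_1 \oplus \cdots \oplus \ell^{(i)}_{n_i}$ is a direct sum of line bundles, so that the Chern roots $x^{(i)}_1, \dots, x^{(i)}_{n_i}$ of $\EEE_i$ are algebraically independent. First I would fix these independent variables and interpret them as the standard weights of $V_i = \CC^{n_i}$ regarded as the defining representation of $GL_{n_i}$.

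Next I would set $G = GL_{n_1} \times \cdots \times GL_{n_k}$ and consider the $G$-module
\[
W \coloneqq \SSS^{\mu^{(1)}}(V_1) \otimes \cdots \otimes \SSS^{\mu^{(k)}}(V_k),
\]
where the $i$-th factor $GL_{n_i}$ acts only on the $i$-th tensor slot. The key identification to make is that the multiset of weights of $W$ (relative to the diagonal maximal torus of $G$) coincides with the multiset of Chern roots of $\SSS^{\mu^{(1)}}(\EEE_1) \otimes \cdots \otimes \SSS^{\mu^{(k)}}(\EEE_k)$: the Schur-functor rule for Chern roots produces the weights of $\SSS^{\mu^{(i)}}(V_i)$ factor by factor, and the tensor-product rule $x^{(i)}_a + x^{(j)}_b$ matches the weights of a tensor product of representations across different factors. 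Consequently, $s_{\lambda}\bigl(\SSS^{\mu^{(1)}}(\EEE_1) \otimes \cdots \otimes \SSS^{\mu^{(k)}}(\EEE_k)\bigr)$ equals the Weyl character of the $G$-module $\SSS^{\lambda}(W)$.

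Then I would invoke the standard representation theory of $G$: tensor products of polynomial representations are polynomial, and Schur functors preserve the class of polynomial representations, so $\SSS^{\lambda}(W)$ is a polynomial representation of $G$. Since the irreducible polynomial representations of $G$ are exactly the external tensor products $\SSS^{\nu^{(1)}}(V_1) \otimes \cdots \otimes \SSS^{\nu^{(k)}}(V_k)$ and any polynomial $G$-representation decomposes uniquely as a direct sum of these, we obtain an isomorphism
\[
\SSS^{\lambda}(W) \;\cong\; \bigoplus_{\nu^{(1)}, \dots, \nu^{(k)}} c^{\lambda, \mu^{(1)}, \dots, \mu^{(k)}}_{\nu^{(1)}, \dots, \nu^{(k)}} \cdot \SSS^{\nu^{(1)}}(V_1) \otimes \cdots \otimes \SSS^{\nu^{(k)}}(V_k)
\]
with nonnegative integer multiplicities. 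Taking characters and using that the character of $\SSS^{\nu^{(i)}}(V_i)$ is $s_{\nu^{(i)}}(\EEE_i)$ in Chern plethysm notation would yield the claimed expansion.

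The main obstacle I would expect is the verification of the weight identification in the second step, in particular making sure the bookkeeping in the Schur-functor-of-tensor-product weight formula agrees with the Chern-root formula used in the definition of Chern plethysm, and confirming that the natural $G$-action on $\SSS^{\lambda}(W)$ is the one whose character is obtained by substituting these weights into $s_\lambda$. This is a matter of commuting the diagonal $G$-action past the Schur functor, which is functorial but deserves a careful check; once granted, complete reducibility of polynomial $G$-representations finishes the argument.
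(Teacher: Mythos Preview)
Your argument has a fundamental gap at the second step: the Chern plethysm $s_{\lambda}\bigl(\SSS^{\mu^{(1)}}(\EEE_1) \otimes \cdots \otimes \SSS^{\mu^{(k)}}(\EEE_k)\bigr)$ is \emph{not} the Weyl character of $\SSS^{\lambda}(W)$. It is true that the multiset of Chern roots of the bundle coincides with the multiset of weights of $W$, written additively as linear forms in the $x^{(i)}_j$. But ``substituting weights into $s_\lambda$'' does not produce the character of $\SSS^\lambda(W)$. The character of $\SSS^\lambda(W)$ is obtained by \emph{exponentiating} the weights (turning each linear form $\sum a_j x_j$ into the monomial $\prod x_j^{a_j}$) and then applying $s_\lambda$ to those monomials; that is the classical plethysm $s_\lambda[s_{\mu^{(1)}}(X_{n_1}) \cdots s_{\mu^{(k)}}(X_{n_k})]$. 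By contrast, Chern plethysm substitutes the linear forms themselves as the arguments of $s_\lambda$. A quick degree count shows the mismatch: the Weyl character of $\SSS^\lambda(W)$ is homogeneous of degree $|\lambda|\cdot(|\mu^{(1)}|+\cdots+|\mu^{(k)}|)$ in the $x^{(i)}_j$, whereas the Chern plethysm has degree $|\lambda|$. For a concrete instance, take $k=1$, $n_1=2$, $\mu^{(1)}=(1,1)$, $\lambda=(1)$: then $s_1(\wedge^2\EEE)=x_1+x_2$, but $\ch\,\SSS^{(1)}(\wedge^2\CC^2)=x_1x_2$.

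The paper is explicit about this distinction: Section~\ref{Chern} compares Chern plethysm with classical plethysm and notes there is no natural interpretation of $F(\SSS^\lambda(\EEE))$ via classical plethysm, and Section~\ref{Open} poses exactly the question of finding a $GL_n$-module with Weyl character $s_\lambda(\SSS^\mu(\EEE))$ as the open Problem~\ref{general-pragacz-problem}, remarking that the natural action on $\SSS^\lambda(\SSS^\mu(\CC^n))$ does \emph{not} work for the degree reason above. As for the paper's own treatment, it does not prove Theorem~\ref{pragacz-theorem} at all: it is quoted from Pragacz, whose argument rests on the Fulton--Lazarsfeld numerical positivity theorem (ultimately Hard Lefschetz), a genuinely geometric input with no known elementary or representation-theoretic substitute.
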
 

Pragacz's Theorem~\ref{pragacz-theorem} relies on deep work of Fulton and Lazarsfeld \cite{FL}
in the context of numerical positivity. The Hard Lefschetz Theorem is a key tool in \cite{FL}.

We are ready to deduce the Schur positivity of the Boolean product polynomials.

\begin{theorem}
\label{boolean-schur-positivity}
The Boolean product polynomials $B_{n,k}(X_n)$ 
and $B_n(X_n)$ are Schur positive.
\end{theorem}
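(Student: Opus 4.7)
The plan is to realize each Boolean product polynomial as the top Chern class of an explicit vector bundle built from exterior powers, and then invoke Pragacz's Theorem~\ref{pragacz-theorem} to conclude Schur positivity. The identification comes from the Schur functor description of Chern roots: if $\EEE$ is a rank $n$ bundle with Chern roots $x_1,\dots,x_n$, then $\wedge^k \EEE = \SSS^{(1^k)}(\EEE)$ has Chern roots $\bigl\{x_{i_1}+\cdots+x_{i_k} : 1\le i_1<\cdots<i_k \le n\bigr\}$, a set of cardinality $\binom{n}{k}$. To ensure the $x_i$ may be treated as algebraically independent symmetric function variables, I take $X=(\PP^{\infty})^n$ and let $\EEE$ be the direct sum of the pulled-back tautological line bundles, as in the Remark preceding Theorem~\ref{pragacz-theorem}.

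The first step is to recognize that the top Chern class of any rank $r$ bundle equals the product of its Chern roots, which in Schur-function language is $s_{(1^r)}$ evaluated on those Chern roots. Applying this to $\wedge^k \EEE$, which has rank $\binom{n}{k}$, yields the identity
\begin{equation*}
B_{n,k}(X_n) \;=\; \prod_{1\le i_1<\cdots<i_k\le n}(x_{i_1}+\cdots+x_{i_k}) \;=\; s_{(1^{\binom{n}{k}})}\bigl(\SSS^{(1^k)}(\EEE)\bigr).
\end{equation*}
This expression is precisely of the form to which Pragacz's theorem applies, with one bundle ($\EEE_1=\EEE$), inner partition $\mu^{(1)}=(1^k)$, and outer partition $\lambda=(1^{\binom{n}{k}})$. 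Theorem~\ref{pragacz-theorem} then produces a nonnegative integer expansion $B_{n,k}(X_n)=\sum_\nu c_\nu \, s_\nu(\EEE)$, and since $s_\nu(\EEE)=s_\nu(x_1,\dots,x_n)$, this establishes Schur positivity of $B_{n,k}(X_n)$.

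For the total polynomial $B_n(X_n)=\prod_{k=1}^n B_{n,k}(X_n)$, the cleanest route is to observe that a product of Schur positive symmetric polynomials is Schur positive, since Littlewood--Richardson coefficients are nonnegative integers. A slightly slicker geometric alternative is to note that the Whitney sum formula gives
\begin{equation*}
B_n(X_n) \;=\; s_{(1^{2^n-1})}\!\left(\bigoplus_{k=1}^n \SSS^{(1^k)}(\EEE)\right),
\end{equation*}
because the total Chern class of a direct sum is the product of the total Chern classes, so the top Chern class of the sum is the product of the individual top Chern classes. Either viewpoint yields Schur positivity of $B_n(X_n)$.

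The only real obstacle is the correct combinatorial identification in the first step, namely recognizing the list of Chern roots of $\wedge^k \EEE$ as exactly the linear forms appearing in $B_{n,k}$; once this is in hand, Pragacz's theorem (together with the Fulton--Lazarsfeld positivity results it rests on) does all the heavy lifting. No explicit computation of the coefficients $c_\nu$ is attempted here, and indeed producing a combinatorial formula for them is discussed as an open problem in the introduction.
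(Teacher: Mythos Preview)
Your proof is correct and follows essentially the same route as the paper: identify $B_{n,k}(X_n)$ with $s_{(1^{\binom{n}{k}})}(\wedge^k\EEE)=s_{(1^{\binom{n}{k}})}(\SSS^{(1^k)}(\EEE))$, invoke Theorem~\ref{pragacz-theorem}, and then handle $B_n(X_n)$ via the Littlewood--Richardson rule. One small caveat on your ``slicker geometric alternative'': the identity $B_n(X_n)=s_{(1^{2^n-1})}\bigl(\bigoplus_{k=1}^n \SSS^{(1^k)}(\EEE)\bigr)$ is correct, but Theorem~\ref{pragacz-theorem} as stated applies to \emph{tensor products} of Schur functors of bundles, not direct sums, so Schur positivity does not follow from it directly in that formulation---your first (Littlewood--Richardson) argument for $B_n$ is the one that actually closes the loop.
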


\begin{proof}
  Let $\EEE \twoheadrightarrow X$ be a rank $n$ vector bundle over a
  smooth variety $X$ with Chern roots $x_1, \dots, x_n$.  The $k^{th}$
  exterior power $\wedge^k \EEE$ has Chern roots
  $\{ x_{i_1} + \cdots + x_{i_k} \,:\, 1 \leq i_1 < \cdots < i_k \leq
  n \}$.  By Pragacz's Theorem~\ref{pragacz-theorem}, the polynomial
  $s_{\lambda}(\wedge^k \EEE )$ is Schur positive for any partition
  $\lambda$.  In particular, if we take $\lambda = (1, \dots, 1)$ to
  be a single column of size ${n \choose k}$, we have
 \begin{equation}
 s_{\lambda}\left( \wedge^k \EEE \right) = \prod_{1 \leq i_1 < \cdots < i_k \leq n} (x_{i_1} + \cdots + x_{i_k}) =
 B_{n,k}(X_n),
\end{equation}
so $ B_{n,k}(X_n)$ has a Schur positive expansion.  Since
$B_n(X_n) = \prod_{1 \leq k \leq n} B_{n,k}(X_n)$, the
Littlewood-Richardson rule implies that $B_n(X_n)$ is also Schur
positive.
\end{proof}

As stated in the introduction, it is easy to determine the Schur
expansion explicitly for $B_{n,1}(X_n)=s_{(1^n)}(X_n)$ and
$B_{n,2}(X_n)=s_{(n-1,n-2,\ldots,1)}$ for $n\geq 2$.  We will discuss
the Schur expansion for $B_{n,n-1}(X_n)$ in Section~\ref{Analogue}.
No effective formula for the Schur expansion of $B_{n,k}(X_n)$ is
known for $3\leq k\leq n-2$.  

\begin{problem}
\label{combinatorial-boolean-problem}
Find a combinatorial interpretation for the coefficients in the Schur
expansions of $B_{n,k}(X_n)$ and $B_n(X_n)$.  
\end{problem}

Theorem~\ref{boolean-schur-positivity} guarantees the existence of $GL_n$-modules whose
Weyl characters are $B_{n,k}(X_n)$ and $B_n(X_n)$.

\begin{problem}
\label{gl-module-problem}
Find natural $GL_n$-modules $V_{n,k}$ and $V_n$ such that 
$\ch(V_{n,k}) = B_{n,k}(X_n)$ and $\ch(V_n) = B_n(X_n)$.
\end{problem}

If $U$ and $W$ are $GL_n$-modules and we endow $U \otimes W$ with the diagonal action
$g.(u \otimes w) \coloneqq (g.u) \otimes (g.w)$ of $GL_n$, then
$\ch(U \otimes W) = \ch(U) \cdot \ch(W)$.
If we can find a module $V_{n,k}$ as in Problem~\ref{gl-module-problem},
 we can therefore take $V_n = V_{n,1} \otimes V_{n,2} \otimes \cdots \otimes V_{n,n}$.
 
 If $V_{n,k}$ is a $GL_n$-module as in Problem~\ref{gl-module-problem}, then we must have
 \begin{equation}
 \dim(V_{n,k}) = B_{n,k}(x_1, \dots, x_n) \mid_{x_1 = \cdots = x_n = 1} = k^{{n \choose k}}.
 \end{equation}
 A natural vector space of this dimension may be obtained as follows. Let $\ee_1, \dots, \ee_n$ be the 
 standard basis of $\CC^n$. For any subset $I \subseteq [n]$, let $\CC^I$ be the span 
 of $\{ \ee_i \,:\, i \in I \}$.  Then
 \begin{equation}
 V_{n,k} \coloneqq \bigotimes_{I \subseteq [n], \, \, |I| = k} \CC^I
 \end{equation}
 is a vector space of the correct dimension.
The action of the diagonal subgroup $(\CC^{\times})^n \subseteq GL_n$ 
preserves each tensor factor of $V_{n,k}$, and we have
\begin{equation}
\mathrm{trace}_{V_{n,k}} ( \mathrm{diag}(x_1, \dots, x_n)) = B_{n,k}(x_1, \dots, x_n).
\end{equation}
One way to solve Problem~\ref{gl-module-problem} would be to extend this action to the full
general linear group $GL_n$.

\subsection{Bivariate Boolean Product Polynomials}
What happens when we apply Pragacz's Theorem~\ref{pragacz-theorem} to the case
of more than one vector bundle $\EEE_i$?
This yields Schur positivity results involving polynomials over more than one set of variables.
For clarity, we describe the case of two bundles here.

Let $\EEE$ be a vector bundle with Chern roots $x_1, \dots, x_n$ and $\FFF$ be a vector bundle 
with Chern roots $y_1, \dots, y_m$.  For $1 \leq k \leq n$ and $1 \leq \ell \leq m$, we have the extension
 of the Boolean product polynomial to two sets of variables
\begin{equation}
P_{k,\ell}(X_n; Y_m) \coloneqq \prod_{1 \leq i_1 < \cdots < i_k \leq n}\
\prod_{1 \leq j_1 < \cdots < j_{\ell} \leq m} (x_{i_1} + \cdots + x_{i_k}+ 
y_{j_1} + \cdots + y_{j_{\ell}}).
\end{equation}
Observe that $P_{k,\ell}$ equals the Chern plethysm $e_d( \wedge^k \EEE \otimes \wedge^{\ell} \FFF)$,
where $d = {n \choose k} {m \choose \ell}$.  By Theorem~\ref{pragacz-theorem}, there are nonnegative
integers $a_{\lambda,\mu}$ such that
\begin{equation}
\label{bivariate-identity}
P_{j,\ell}(X_n; Y_m) = \sum_{\lambda, \mu} a_{\lambda,\mu} \cdot
s_{\lambda}(X_n) \cdot s_{\mu}(Y_m).
\end{equation}
Setting the $y$-variables equal to zero recovers Theorem~\ref{boolean-schur-positivity}.

Equation~\eqref{bivariate-identity} is reminiscent of the {\em dual Cauchy identity} which 
uses the Robinson-Schensted-Knuth correspondence to give a combinatorial proof that
\begin{equation}
\label{dual-cauchy}
\prod_{1\leq i\leq n}\prod_{1\leq j \leq m} (x_i + y_j) = \sum_{\lambda \subseteq (m^n)} s_{\lambda}(X_n) \cdot s_{\tilde{\lambda}}(Y_m),
\end{equation}
where $\tilde{\lambda}$ is the transpose of the complement of $\lambda$ inside the rectangular Ferrers shape $(m^n)$.
This raises the following natural problem.

\begin{problem}
\label{rsk-problem}
Develop a variant of the RSK correspondence which proves the integrality and nonnegativity of the 
$a_{\lambda,\mu}$ in Equation~\eqref{bivariate-identity}.
\end{problem}



\section{A combinatorial interpretation of Lascoux's Formula}\label{sec:bn2}

Pragacz's Theorem has the following sharpening due to Lascoux in the
case of one vector bundle.  In fact, Lascoux's Theorem was part of the
inspiration for Pragacz's Theorem.  

\begin{theorem}
\label{lascoux-theorem} (Lascoux \cite{Lascoux}) Let $\EEE$ be a rank $n$ vector bundle
with Chern roots $x_1, \dots, x_n$, so that we have the total Chern classes
\begin{align*}
c(\wedge^2 \EEE) =&
\prod_{1 \leq i < j \leq n} (1 + x_i + x_j), \text{ and} \\
c(Sym^2 \EEE) =&
\prod_{1 \leq i \leq j \leq n} (1 + x_i + x_j).
\end{align*}
Let
$\delta_n \coloneqq (n, n-1, \dots, 1)$ be the staircase partition with largest part $n$.
There exist integers $d^{(n)}_{\lambda, \mu}$ for $\mu \subseteq \lambda$ such that 
\begin{align*}
&  \prod_{1 \leq i < j \leq n} (1 + x_i + x_j) = 2^{-{n \choose 2}}
  \sum_{\mu \subset \delta_{n-1} } 2^{|\mu|} \cdot
  d^{(n)}_{\delta_{n-1},\mu} \cdot s_{\mu}(x_1, \dots, x_n), \text{ and}\\
&  \prod_{1 \leq i \leq j \leq n} (1 + x_i + x_j) = 2^{-{n \choose 2}}
  \sum_{\mu \subset \delta_n} 2^{|\mu|} \cdot d^{(n)}_{\delta_n, \mu} \cdot
  s_{\mu}(x_1, \dots, x_n),
\end{align*}
\end{theorem}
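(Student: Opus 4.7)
The plan is to reduce both identities to a single problem: Schur-expanding a staircase Schur polynomial evaluated at shifted variables $y_i = 1 + 2x_i$. The key elementary identity is
\begin{equation*}
1 + x_i + x_j \;=\; \tfrac{1}{2}\bigl((1 + 2x_i) + (1 + 2x_j)\bigr),
\end{equation*}
which yields
\begin{equation*}
\prod_{1 \leq i < j \leq n}(1 + x_i + x_j) \;=\; 2^{-\binom{n}{2}} \prod_{i<j}(y_i + y_j) \;=\; 2^{-\binom{n}{2}}\, s_{\delta_{n-1}}(y_1, \ldots, y_n),
\end{equation*}
using the identification $B_{n,2}(Y_n) = s_{\delta_{n-1}}(Y_n)$ recalled in the introduction. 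For the second identity I would factor off the diagonal contributions and compute
\begin{equation*}
\prod_{i \leq j}(1+x_i+x_j) \;=\; \prod_i (1 + 2x_i) \cdot \prod_{i<j}(1+x_i+x_j) \;=\; 2^{-\binom{n}{2}}\, e_n(y)\, s_{\delta_{n-1}}(y) \;=\; 2^{-\binom{n}{2}}\, s_{\delta_n}(y),
\end{equation*}
using the Pieri identity $e_n \cdot s_{\delta_{n-1}} = s_{\delta_{n-1} + (1^n)} = s_{\delta_n}$ after padding $\delta_{n-1}$ to length $n$. Both identities now reduce to a single evaluation problem.

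Next, I would introduce coefficients $d^{(n)}_{\lambda,\mu}$ by the Schur expansion
\begin{equation*}
s_\lambda(1 + x_1, \ldots, 1 + x_n) \;=\; \sum_\mu d^{(n)}_{\lambda,\mu}\, s_\mu(x_1, \ldots, x_n),
\end{equation*}
and extract the $2^{|\mu|}$ factor via the homogeneity $s_\mu(2x_1, \ldots, 2x_n) = 2^{|\mu|} s_\mu(x_1, \ldots, x_n)$ after rescaling $x_i \mapsto 2x_i$. Integrality is automatic: $s_\lambda(z_1, \ldots, z_n)$ has integer coefficients as a polynomial in the $z_i$, so its substitution at $z_i = 1 + x_i$ lies in $\ZZ[X_n]^{\symm_n}$, which has $\{s_\mu(X_n) : \ell(\mu) \leq n\}$ as a $\ZZ$-basis. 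An explicit form can be extracted from Jacobi--Trudi $s_\lambda = \det(h_{\lambda_i - i + j})$ together with the identity $h_m(1 + X_n) = \sum_{a=0}^m \binom{n+m-1}{m-a}\, h_a(X_n)$, which comes from expanding $\prod_i (1 - t(1+x_i))^{-1} = (1-t)^{-n} \prod_i (1 - \tfrac{t}{1-t} x_i)^{-1}$.

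The most delicate point will be verifying the shape containment $\mu \subseteq \lambda$, rather than merely the degree bound $|\mu| \leq |\lambda|$. A one-variable degree count --- each $y_i$ can appear in at most $\lambda_1$ boxes of an SSYT of shape $\lambda$, hence $y_i$ has degree at most $\lambda_1$ in $s_\lambda(y)$ --- only forces $\mu_1 \leq \lambda_1$. For the full containment I would pursue a combinatorial model for $d^{(n)}_{\lambda,\mu}$ via
\begin{equation*}
s_\lambda(1 + x_1, \ldots, 1 + x_n) \;=\; \sum_{T \in \SSYT(\lambda, \leq n)} \, \sum_{S \subseteq \lambda}\, \prod_{\square \in S} x_{T(\square)},
\end{equation*}
obtained by distributing each $(1 + x_i)^{m_i(T)}$ over choices of ``activated'' boxes $S \subseteq \lambda$. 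Reorganizing this double sum so that the coefficient of $s_\mu(X_n)$ arises from pairs $(T, S)$ in which the activated positions trace out a copy of $\mu$ inside $\lambda$ would force $\mu \subseteq \lambda$ automatically; this is precisely the line of attack pursued, in a sharper form, by the reverse flagged fillings developed in Section~\ref{sec:bn2}.
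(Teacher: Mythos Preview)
Your reduction is correct and is essentially Lascoux's own trick: the substitution $y_i = 1+2x_i$ together with $B_{n,2}(Y_n)=s_{\delta_{n-1}}(Y_n)$ and the Pieri step $e_n\cdot s_{\delta_{n-1}}=s_{\delta_n}$ cleanly reduce both identities to expanding $s_\lambda(1+x_1,\dots,1+x_n)$ in Schur polynomials, and your integrality and $2^{|\mu|}$ arguments are fine. The paper does not prove Theorem~\ref{lascoux-theorem} directly (it is cited from Lascoux); the closely related argument appearing in the proof of Theorem~\ref{lascoux-schur-expansion} instead uses the antisymmetrizer identity
\[
\prod_{i<j}(1+x_i+x_j)=A_n\Bigl(\prod_i x_i^{\,n-i}(1+x_i)^{\,n-i}\Bigr),
\]
which leads straight to the binomial determinant \eqref{eq:las.det}. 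Your substitution route is equivalent but more conceptual: it exhibits both products as a single Schur polynomial in shifted variables, so homogeneity and the $\ZZ$-basis property of Schur polynomials give integrality for free.

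The genuine gap is the containment $\mu\subseteq\lambda$. Your ``activated boxes'' sketch does not close: expanding $\sum_{T}\sum_{S\subseteq\lambda}\prod_{\square\in S}x_{T(\square)}$ produces, for each pair $(T,S)$, a bare monomial $x^{T|_S}$, not a Schur polynomial indexed by the \emph{shape} of $S$; regrouping these monomials into Schur functions involves signs and cancellation, so nothing forces the surviving $\mu$ to sit inside $\lambda$. A degree-in-subsets argument only yields $\mu_1+\cdots+\mu_k\le\lambda_1+\cdots+\lambda_k$ (dominance), which is strictly weaker than containment (e.g.\ $(2,2,2)\trianglelefteq(3,2,1)$ but $(2,2,2)\not\subseteq(3,2,1)$). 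The clean fix is to abandon Jacobi--Trudi in favor of the bialternant: since the Vandermonde is translation-invariant,
\[
s_\lambda(1+X_n)=\frac{\det\bigl((1+x_i)^{\lambda_j+n-j}\bigr)}{\Delta_n},
\]
and expanding each column by the binomial theorem gives exactly $d^{(n)}_{\lambda,\mu}=\det\binom{\lambda_i+n-i}{\mu_j+n-j}$, matching \eqref{eq:las.det}. Now if $\mu\not\subseteq\lambda$, pick $r$ with $\mu_r>\lambda_r$; then $\binom{\lambda_i+n-i}{\mu_j+n-j}=0$ for all $i\ge r$ and $j\le r$, an $(n-r+1)\times r$ zero block with $(n-r+1)+r>n$, so the determinant vanishes. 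This simultaneously proves containment and recovers the explicit formula the paper quotes.
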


The integers $d^{(n)}_{\lambda,\mu}$ of Theorem~\ref{lascoux-theorem} are given as follows.  Pad $\lambda$
and $\mu$ with 0's so that both sequences $(\lambda_1, \dots, \lambda_n)$
and $(\mu_1, \dots, \mu_n)$ have length $n$.  Assuming $\mu \subseteq \lambda$, the integer $d^{(n)}_{\lambda,\mu}$
is the following determinant of binomial coefficients
\begin{equation}\label{eq:las.det}
d^{(n)}_{\lambda, \mu} = \det \left( {\lambda_i + n - i \choose \mu_j + n - j} \right)_{1 \leq i,j \leq n}.
\end{equation}
The positivity of this determinant is not obvious. Lascoux \cite{Lascoux} gave a geometric proof that 
$d^{(n)}_{\lambda,\mu} \geq 0$.
This determinant 
was a motivating example for the seminal work of Gessel and Viennot \cite{GV}; they gave an interpretation
of $d^{(n)}_{\lambda,\mu}$ (and many other such determinants) 
as counting  families of nonintersecting lattice paths.

By the work of Lascoux \cite{Lascoux} or Gessel-Viennot \cite{GV}, the Schur expansions of
Theorem~\ref{lascoux-theorem} have nonnegative rational coefficients.
In order to deduce that these coefficients are in fact nonnegative integers, observe that the monomial expansions
of $\prod_{1 \leq i < j \leq n}(1 + x_i + x_j)$ and
$\prod_{1 \leq i \leq j \leq n}(1 + x_i + x_j)$ visibly have positive integer coefficients.  Since the transition matrix
from the monomial symmetric functions to the Schur functions (the {\em inverse Kostka matrix}) has integer entries
(some of them negative),
we conclude that the Schur expansions of Theorem~\ref{lascoux-theorem}
have integer coefficients.  For example,
\begin{equation}\label{eq:n=3}
  \prod_{1 \leq i < j \leq 3} (1 + x_i + x_j) =
1+2s_{(1)}(X_3) + s_{(2)}(X_3) + 2s_{(1,1)}(X_3) +  s_{(2,1)}(X_3).
\end{equation}

A manifestly integral and positive formula for the Schur expansions in 
Theorem~\ref{lascoux-theorem} may be given as follows.
For a partition $\mu \subseteq \delta_{n-1}$, a filling $T: \mu \rightarrow \ZZ_{\geq 0}$ 
is {\em reverse flagged} if 
\begin{itemize}
\item the entries of $\mu$ decrease strictly across rows and weakly down columns, and
\item the entries in row $i$ of $\mu$ lie between $1$ and $n-i$.
\end{itemize}
Let $r^{(n)}_{\mu}$ be the number of reverse flagged fillings of shape $\mu$.
In the case $n = 3$, the collection of reverse flagged fillings of shapes $\mu \subseteq \delta_2 = (2,1)$ are as follows:
\begin{equation*}
\varnothing \quad
\begin{young}
1
\end{young} \quad
\begin{young}
2
\end{young} \quad
\begin{young}
2 & 1
\end{young} \quad
\begin{young}
2 \cr 1
\end{young} \quad
\begin{young}
1 \cr 1
\end{young} \quad
\begin{young}
2 & 1 \cr 1
\end{young} \quad
\end{equation*}
Compare the shapes in this example to the expansion in \eqref{eq:n=3}.   

\begin{theorem}
\label{lascoux-schur-expansion}
For $n \geq 1$ we have the Schur expansions
\begin{align}\label{eq:lascoux.1}
\prod_{1 \leq i < j \leq n} (1 + x_i + x_j) &= \sum_{\mu \subseteq \delta_{n-1}} r^{(n)}_{\mu} \cdot s_{\mu}(X_n), \\
\prod_{1 \leq i \leq j \leq n} (1 + x_i + x_j) &= \sum_{\lambda \subseteq \delta_{n}}
\sum_{\substack{\mu \subseteq \lambda \cap \delta_{n-1} \\ \lambda/\mu \text{ a vertical strip}}} 
2^{|\lambda/\mu|} 
r^{(n)}_{\mu} \cdot s_{\lambda}(X_n). \label{eq:lascoux.2}
\end{align}
Recall that the set-theoretic difference
 $\lambda/\mu$ of Ferrers diagrams
  is a {\em vertical strip} if no row of $\lambda/\mu$ contains more than one box.
\end{theorem}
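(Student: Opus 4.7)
The plan is to prove \eqref{eq:lascoux.1} by a direct determinantal computation and then derive \eqref{eq:lascoux.2} from it via Pieri's rule. For \eqref{eq:lascoux.1}, introduce the auxiliary variables $y_i \coloneqq x_i(1+x_i) = x_i + x_i^2$. The key algebraic identity
\[ (x_i - x_j)(1 + x_i + x_j) = y_i - y_j \]
implies, upon taking the product over $1 \leq i < j \leq n$, that
\[ \Delta_n \cdot \prod_{1 \leq i < j \leq n}(1 + x_i + x_j) = \prod_{1 \leq i < j \leq n}(y_i - y_j) = \det(y_j^{n-i})_{i,j=1}^n. \]
Expanding each matrix entry $y_j^{n-i} = x_j^{n-i}(1+x_j)^{n-i} = \sum_{k=0}^{n-i} \binom{n-i}{k} x_j^{n-i+k}$ via the binomial theorem, using multilinearity of the determinant in its rows, and invoking the bialternant formula $\det(x_j^{\nu_i})/\Delta_n = \sign(\sigma) s_\mu(X_n)$ (where $\sigma$ is the permutation sorting $(\nu_i)$ into $\mu + \delta$, with $\delta = (n-1, n-2, \dots, 0)$) yields the closed form
\[ \prod_{1 \leq i < j \leq n}(1 + x_i + x_j) = \sum_{\mu \subseteq \delta_{n-1}} \det\left(\binom{n-j}{\mu_i - i + j}\right)_{1 \leq i,j \leq n} s_\mu(X_n). \]

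It remains to identify each Schur coefficient with $r^{(n)}_\mu$. For this, apply the Lindstr\"om--Gessel--Viennot lemma: interpret the matrix entry $\binom{n-j}{\mu_i - i + j}$ as the number of monotone lattice paths in $\ZZ^2$ from $A_i \coloneqq (i - \mu_i,\, \mu_i - i)$ to $B_j \coloneqq (j,\, n - 2j)$. Because $\mu$ is a partition, both sequences $(A_i)$ and $(B_j)$ are sorted left-to-right in the same linear order, so the determinant counts non-intersecting path families $(P_i \colon A_i \to B_i)_{i=1}^n$. Encoding each $P_i$ by the weakly increasing sequence $h_1 \leq \dots \leq h_{\mu_i}$ of $y$-heights of its right-steps, strictifying via $h_k \mapsto h_k + (k-1)$, shifting into $[1, n-i]$, and reversing, produces a row-$i$ filling with $\mu_i$ strictly decreasing entries in $[1, n-i]$. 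The non-intersection of consecutive paths $P_i$ and $P_{i+1}$ translates into the column inequality $T(i, k) \geq T(i+1, k)$, yielding exactly the reverse flagged filling conditions.

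Finally, \eqref{eq:lascoux.2} follows from \eqref{eq:lascoux.1} by factoring out the diagonal:
\[ \prod_{1 \leq i \leq j \leq n}(1+x_i+x_j) = \prod_{i=1}^n (1 + 2x_i) \cdot \prod_{1 \leq i < j \leq n}(1+x_i+x_j), \]
writing $\prod_i(1+2x_i) = \sum_{k=0}^n 2^k e_k(X_n)$, and invoking Pieri's rule $e_k \cdot s_\mu = \sum s_\lambda$ summed over $\lambda$ with $\lambda/\mu$ a vertical strip of size $k$. Substituting \eqref{eq:lascoux.1} and swapping the order of summation produces \eqref{eq:lascoux.2}; the containment $\lambda \subseteq \delta_n$ is automatic since $\mu \subseteq \delta_{n-1}$ together with the vertical-strip condition forces $\lambda_i \leq \mu_i + 1 \leq n - i + 1$. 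The main obstacle is the rigorous verification of the LGV-to-reverse-flagged-fillings bijection in the middle step: while the existence of a bijection between non-intersecting path families and flagged tableaux is standard, one must carefully track the strictification shifts to confirm that the flag $[1, n-i]$ in row $i$ and the weakly-decreasing-down-columns condition emerge precisely from the choice of $A_i$ and $B_j$.
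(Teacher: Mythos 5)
Your proposal follows essentially the same route as the paper: the algebraic identity $(x_i-x_j)(1+x_i+x_j)=y_i-y_j$ with $y_i=x_i(1+x_i)$, Vandermonde plus binomial expansion to arrive at a binomial determinant for each Schur coefficient, a Lindstr\"om--Gessel--Viennot interpretation with a bijection to reverse flagged fillings, and the dual Pieri rule for the second identity. The only differences are cosmetic (you work with the transpose of the paper's determinant and encode paths by heights of right-steps rather than positions of vertical edges), and your closing caveat about verifying the nonintersection-to-column-inequality step is the same detail the paper also leaves largely to the reader.
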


\begin{proof}
We have 
\begin{equation}
\prod_{1 \leq i \leq j \leq n} (1 + x_i + x_j) = \left[ \sum_{r = 0}^n 2^r \cdot e_r(X_n) \right] \cdot 
\prod_{1 \leq i < j \leq n} (1 + x_i + x_j).
\end{equation}
By the {\em dual Pieri rule}, the Schur expansion of $e_r(X_n) \cdot s_{\mu}(X_n)$ is obtained by adding
a vertical strip of size $r$ to $\mu$ in all possible ways, so the second equality follows from the first.

We start with the observation
\begin{equation}
\prod_{1 \leq i < j \leq n} (1 + x_i + x_j)  = \prod_{1 \leq i < j \leq n}
\frac{x_i(1+x_i) - x_j(1+x_j)}{x_i - x_j}.
\end{equation}
Comparing this product with the formula for the Vandermonde
determinant and using the antisymmetrizing operators $A_n$ defined in
\eqref{eq:anti.symm} , we have
\begin{equation}
\label{antisymmetrizer-interpretation} \prod_{1 \leq i < j \leq n} (1
+ x_i + x_j) = A_n \left( \prod_{i = 1}^n x_i^{n-i} (1 + x_i)^{n-i}
\right).
\end{equation}
The antisymmetrizing operator acts linearly, so
$\prod_{1 \leq i < j \leq n} (1 + x_i + x_j)$ is the positive sum of
terms of the form
$c_\alpha A_n(x_1^{\alpha_1}x_2^{\alpha_2} \cdots x_n^{\alpha_n})$.  If
$\alpha_i=\alpha_j$ for $i\neq j$, then
$A_n(x_1^{\alpha_1}x_2^{\alpha_2} \cdots x_n^{\alpha_n})=0$.  If the
$\alpha_i$'s are all distinct, then there exists a permutation
$w \in \symm_n$ and a partition $\mu= (\mu_1, \dots, \mu_n)$ such that

$$x_1^{\alpha_1}x_2^{\alpha_2} \cdots x_n^{\alpha_n} = x_1^{\mu_{w(1)}
  + n - w(1)} \cdots x_n^{\mu_{w(n)} + n - w(n)},$$ hence
$A_n(x_1^{\alpha_1}x_2^{\alpha_2} \cdots
x_n^{\alpha_n})=\sign(w)s_\mu(X_n).$ Therefore, in
Equation~\eqref{antisymmetrizer-interpretation}, the coefficient of
$s_{\mu}(X_n)$ in the Schur expansion of
$\prod_{1 \leq i < j \leq n} (1 + x_i + x_j)$ is given by the signed
sum
\begin{equation}
\label{signed-sum}
\sum_{w \in \symm_n} \sign(w)  \cdot
\left(
\begin{array}{c}
\text{coefficient of 
$x_1^{\mu_{w(1)} + n - w(1)} \cdots x_n^{\mu_{w(n)} + n - w(n)}$} \\
\text{in $\prod_{i = 1}^n x_i^{n-i} (1 + x_i)^{n-i}$}
\end{array}
\right).
\end{equation}
By the Binomial Theorem, the expression in \eqref{signed-sum} is equal to
\begin{equation}
\label{binomial-signed-sum}
\sum_{w \in \symm_n} \sign(w)  \cdot
{n-1 \choose \mu_{w(1)} - w(1) + 1)} 
{n-2 \choose \mu_{w(2)} - w(2) + 2}  \cdots 
{n-n \choose \mu_{w(n)} - w(n) + n}.
\end{equation}
In turn, the expression in Equation~\eqref{binomial-signed-sum} equals the determinant of binomial coefficients
\begin{equation}
\label{binomial-determinant}
\det \left(
{n-i \choose \mu_j - j + i}
\right)_{1 \leq i, j \leq n}.
\end{equation}

We must show that the determinant \eqref{binomial-determinant} counts
reverse flagged fillings of shape $\mu = (\mu_1, \dots, \mu_n)$ with
rows bounded by $(n-1, n-2, \ldots, 0)$.  To do this, we use
Gessel-Viennot theory \cite{GV}, see also \cite[Sec. 2.7]{ec1}.  For
$1 \leq i \leq n$, define lattice points $p_i$ and $q_i$ by
$p_i = (2i - 2, n - i)$ and
$q_i = (n + i - \mu_i - 2, n - i + \mu_i)$.  The number of paths from
$p_i$ to $q_j$ is the binomial coefficient
${n - i \choose \mu_j - j + i}$, which is the $(i,j)$-entry of the
determinant \eqref{binomial-determinant}.  It follows that the
determinant \eqref{binomial-determinant} counts nonintersecting path
families $\mathbb{L}=(L_1, \ldots, L_n)$ such that $L_i$ connects
$p_i$ to $q_i$ for all $1 \leq i \leq n$; one such nonintersecting
path family is shown below in the $n = 5$ and $\mu = (2,2,1,1,0)$. 
\begin{center}
\begin{tikzpicture}[scale = 0.6]

\draw [help lines] (0,4) -- (0,6);
\draw [help lines] (0,4) -- (3,4);
\draw [help lines] (0,5) -- (3,5);
\draw [help lines] (0,6) -- (2,6);
\draw [help lines] (1,4) -- (1,6);
\draw [help lines] (2,3) -- (2,6);
\draw [help lines] (2,3) -- (5,3);
\draw [help lines] (3,3) -- (3,5);

\draw [ultra thick] (0,4) -- (1,4);
\draw [ultra thick] (1,4) -- (1,5);
\draw [ultra thick] (1,5) -- (2,5);
\draw [ultra thick] (2,5) -- (2,6);

\draw [ultra thick] (2,3) -- (2,4);
\draw [ultra thick] (2,4) -- (3,4);
\draw [ultra thick] (3,4) -- (3,5);

\draw [ultra thick] (4,2) -- (5,2);
\draw [ultra thick] (5,2) -- (5,3);

\draw [ultra thick] (6,1) -- (6,2);

\draw [help lines] (4,2) -- (4,3);
\draw [help lines] (4,2) -- (6,2);
\draw [help lines] (5,2) -- (5,3);

\draw [help lines] (6,1) -- (6,2);

\node at (8,0) {$\bullet$};
\node [above right] at (8,0) {$q_5$};
\node [below left] at (8,0) {$p_5$};
\node at (6,2) {$\bullet$};
\node [above right] at (6,2) {$q_4$};
\node [below left] at (6,1) {$p_4$};
\node at (0,4) {$\bullet$};
\node [below left] at (0,4) {$p_1$};
\node at (5,3) {$\bullet$};
\node [above right] at (5,3) {$q_3$};
\node [above right] at (3,5) {$q_2$};
\node [above right] at (2,6) {$q_1$};
\node at (2,3) {$\bullet$};
\node [below left] at (4,2) {$p_3$};
\node [below left] at (2,3) {$p_2$};
\node at (3,5) {$\bullet$};
\node at (4,2) {$\bullet$};
\node at (2,6) {$\bullet$};
\node at (6,1) {$\bullet$};

\node at (12,3)
{\begin{young} 3 & 1 \cr 3 & 1 \cr 1 \cr  1\cr  \end{young}};
\end{tikzpicture}
\end{center}

For the final step proving the theorem, we will show there is a
bijection from the family of nonintersecting lattice paths with
starting points $(p_1, \ldots, p_n)$ and ending points
$(q_1,\ldots, q_n)$ to reverse flagged fillings of shape $\mu$ with
rows bounded by $(n-1,n-2,\ldots, 0)$.  Let
$\mathbb{L}=(L_1,L_2,\ldots, L_n)$ be such a path family.  For
$1 \leq i \leq n$, label the edges of the lattice path $L_i$ in order
by $1,2,\ldots, n-i$ starting at $q_i$ and proceeding southwest toward
$p_i$.  Observe, the lattice path $L_i$ is completely determined by
the subset $R_i$ of edge labels on its vertical edges.  Furthermore,
$|R_i|=\mu_i$ for each $i$.  Let $F(\mathbb{L})$ be the filling of
$\mu$ with $i$th row given by $R_i$ written in decreasing order from
left to right.  By construction, the entries in the $i$th row are
between $1$ and $n-i$.  One can check the nonintersecting condition is
equivalent to the condition that the columns of $F(\mathbb{L})$ are
weakly decreasing.  The inverse map is similarly easy to construct
from the rows of a reverse flagged filling.  Thus, $F$ is the desired
bijection, and the expansion in \eqref{eq:lascoux.1} holds.  An
example of this correspondence is shown above.
\end{proof}

We note that the number of reverse flagged fillings of $\mu$ is
effectively calculated by the binomial determinant given in
\eqref{binomial-determinant} and by Lascoux's formula
\eqref{eq:las.det}.

\begin{corollary}\label{cor:counting_good_fillings}
For $\mu=(\mu_1,\ldots,\mu_n) \subset \delta_{n-1}$, 
$$ r^{(n)}_{\mu} = \det \left(
  {n-i \choose \mu_j - j + i} \right)_{1 \leq i, j \leq n} =
\frac{2^{|\mu|} }{2^{-{n \choose 2}}} \cdot \det \left( {2n - 2i \choose
    \mu_j + n - j} \right)_{1 \leq i,j \leq n} . $$
\end{corollary}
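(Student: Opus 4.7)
The plan is to obtain both equalities as immediate by-products of the two theorems just proved, combined with Lascoux's determinantal expression \eqref{eq:las.det} for $d^{(n)}_{\lambda,\mu}$. No new ideas are needed; this is a bookkeeping corollary.

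For the first equality, I would observe that the proof of Theorem~\ref{lascoux-schur-expansion} already computes the coefficient of $s_\mu(X_n)$ in $\prod_{1\le i<j\le n}(1+x_i+x_j)$ explicitly: antisymmetrizing the factorization \eqref{antisymmetrizer-interpretation}, expanding each $(1+x_i)^{n-i}$ by the Binomial Theorem, and collecting terms indexed by $w\in\symm_n$ produces the signed sum \eqref{signed-sum}, which is then identified with the determinant \eqref{binomial-determinant}, namely $\det({n-i\choose \mu_j-j+i})$. The Gessel--Viennot bijection at the end of that proof identifies this same determinant with $r^{(n)}_\mu$. Quoting these two identifications side by side gives the first equality of the corollary.

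For the second equality, I would appeal to Theorem~\ref{lascoux-theorem}: the coefficient of $s_\mu(X_n)$ in $\prod_{1\le i<j\le n}(1+x_i+x_j)$ also equals $2^{-\binom{n}{2}}\cdot 2^{|\mu|}\cdot d^{(n)}_{\delta_{n-1},\mu}$. Padding $\delta_{n-1}$ to length $n$ as $(n-1,n-2,\dots,1,0)$, we have $\lambda_i=n-i$, so $\lambda_i+n-i=2(n-i)=2n-2i$, and \eqref{eq:las.det} yields
\[
d^{(n)}_{\delta_{n-1},\mu}=\det\!\left({2n-2i\choose \mu_j+n-j}\right)_{1\le i,j\le n}.
\]
Substituting and equating with $r^{(n)}_\mu$ gives the second equality (with the constant prefactor $2^{|\mu|}\cdot 2^{-\binom{n}{2}}$ as displayed).

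There is no real obstacle: both equalities are simply the identification, along two different routes, of the coefficient of a single Schur function in Lascoux's product. The only point requiring a light check is the reindexing of the staircase: one must verify that padding $\delta_{n-1}$ with a trailing zero does not disturb the determinant (it does not, since the formula \eqref{eq:las.det} is defined for any length-$n$ partition and both $\lambda$ and $\mu$ are padded to length $n$), so that Lascoux's determinant in Theorem~\ref{lascoux-theorem} genuinely takes the clean form $\det({2n-2i\choose \mu_j+n-j})$ claimed in the corollary.
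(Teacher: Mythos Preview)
Your approach is correct and matches the paper's exactly: the paper gives no formal proof of this corollary, only the sentence preceding it noting that $r^{(n)}_\mu$ is computed both by the determinant \eqref{binomial-determinant} from the proof of Theorem~\ref{lascoux-schur-expansion} and by Lascoux's formula \eqref{eq:las.det} via Theorem~\ref{lascoux-theorem}, which is precisely what you spell out.

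One small caution: your derivation correctly produces the prefactor $2^{|\mu|}\cdot 2^{-\binom{n}{2}}=2^{|\mu|-\binom{n}{2}}$, but the corollary \emph{as displayed} in the paper has $\dfrac{2^{|\mu|}}{2^{-\binom{n}{2}}}=2^{|\mu|+\binom{n}{2}}$, which is a typo there; your computation is the right one, so ``as displayed'' is not quite accurate.
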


We also have the following curious relationship between the
coefficients in the Schur expansion of
$\prod_{1 \leq i < j \leq n} (1 + x_i + x_j)$ and alternating sign
matrices, \cite[A005130]{oeis}.   

\begin{corollary}\label{cor:total_good_fillings}
  The sum
  $\sum_{\mu\subseteq \delta_{n-1}} r^{(n)}_{\mu} = \prod_{k=0}^{n-1}
  (3k+1)!/(n+k)!$ which is the number of $n\times n$ alternating sign
  matrices.
\end{corollary}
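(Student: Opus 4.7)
My plan is to leverage the nonintersecting lattice-path interpretation of $r^{(n)}_\mu$ established in the proof of Theorem~\ref{lascoux-schur-expansion}, and then match the resulting path system to a classical enumeration of totally symmetric self-complementary plane partitions (TSSCPPs).

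From the proof of Theorem~\ref{lascoux-schur-expansion}, $r^{(n)}_\mu$ is the number of nonintersecting families $\mathbb{L} = (L_1, \ldots, L_n)$ of north/east lattice paths in which $L_i$ connects $p_i = (2i-2, n-i)$ to $q_i = (n+i-\mu_i-2, n-i+\mu_i)$. Each $L_i$ has length $n-i$ independent of $\mu$, and every endpoint $q_i$ lies on the antidiagonal $x + y = 2n-2$. Consequently, summing over $\mu \subseteq \delta_{n-1}$ amounts to letting each $L_i$ range over \emph{every} north/east path of length $n-i$ starting at $p_i$, subject only to the nonintersection of the family. The partition condition $\mu_1 \geq \cdots \geq \mu_n \geq 0$ is automatic: nonintersection forces strictly decreasing $y$-coordinates of the endpoints, and since $y_i = n - i + \mu_i$, this is exactly the inequality $\mu_i \geq \mu_{i+1}$. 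The bounds $0 \leq \mu_i \leq n-i$ are also automatic, as each $L_i$ has exactly $n-i$ steps starting at $y$-coordinate $n-i$.

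The resulting unconstrained-endpoint nonintersecting lattice-path system coincides, after a straightforward affine change of coordinates, with one of the standard lattice-path encodings of TSSCPPs of order $2n$ used by Mills--Robbins--Rumsey and Andrews. Andrews' theorem on the TSSCPP count then gives $\sum_\mu r^{(n)}_\mu = \prod_{k=0}^{n-1}(3k+1)!/(n+k)!$, and by the Zeilberger--Kuperberg theorem this product equals the number of $n \times n$ alternating sign matrices.

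The main obstacle I expect is making the identification with the TSSCPP path system precise, since different authors use different coordinate conventions and the bijection should be verified on small cases. An alternative avoiding any reference to TSSCPPs is to evaluate $\sum_\mu r^{(n)}_\mu$ as a single Lindstr\"om--Gessel--Viennot determinant: either sum the binomial-coefficient determinants of Corollary~\ref{cor:counting_good_fillings} directly, or append trivial tail segments to each path to convert the free-endpoint problem into a fixed-endpoint one and apply LGV once to the enlarged system. The resulting binomial-coefficient determinant should then reduce to one of Andrews' classical determinant evaluations that yields the ASM product formula.
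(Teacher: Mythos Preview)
Your proposal is correct and lands in the same place as the paper's argument, but the route differs slightly. The paper stays on the determinantal side: it writes $\sum_{\mu} r^{(n)}_{\mu}$ as the sum of the binomial determinants from Corollary~\ref{cor:counting_good_fillings}, strips off the trivial last row to get an $(n-1)\times(n-1)$ sum, reindexes via $r_i=\mu_i+n-i$, and then recognizes the resulting expression as exactly Di~Francesco's formula \cite[Eq.~3.1]{DiFr} for the number of TSSCPPs of order $2n$, whence the ASM product via Andrews. Your primary route instead works on the lattice-path side, reinterpreting the sum over $\mu$ as a free-endpoint nonintersecting path system and matching that system directly to a TSSCPP path model. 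Your ``alternative'' of summing the determinants and matching to a known evaluation is essentially what the paper does, with Di~Francesco in place of Andrews as the reference for the determinant sum.

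Both approaches offload the hard enumeration to the TSSCPP/ASM literature; the paper's version has the advantage of pointing to a single precise equation in the literature, while yours requires pinning down which TSSCPP path model you mean and the exact coordinate change, which (as you note) is the one place your write-up is presently vague.
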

\begin{proof}
  From the first determinantal expression for $r^{(n)}_{\mu}$ in
  Corollary~\ref{cor:counting_good_fillings}, we have
\begin{align}\label{eqn:di_francesco}
\sum_{\mu\subseteq \delta_{n-1}}r^{(n)}_{\mu}=\sum_{\mu\subseteq \delta_{n-1}}\det \left(
{n-i \choose \mu_j - j + i}
  \right)_{1 \leq i, j \leq n}.
\end{align}
Note, the bottom row of each binomial determinant is all zeros except
the $(n,n)$ entry which is 1, hence without changing the sum we can
restrict to the determinants of $(n-1)\times (n-1)$ matrices.  If we
define $r_i=\mu_i+n-i$ for $1\leq i\leq n-1$, then the expression on
the right hand side (up to a minor reindexing) in
Equation~\eqref{eqn:di_francesco} is the expression in \cite[Equation
3.1]{DiFr},
which enumerates the number of totally symmetric
self-complementary plane partitions of $2n$. Such partitions are known
to be equinumerous with the set of $n\times n$ alternating sign
matrices \cite{Andrews}, and the claim follows.
\end{proof}

\begin{remark}
Reverse flagged fillings show up in Kirillov's work in disguise \cite{Kirillov}, albeit with different motivation. 
More precisely, Kirillov \cite[Section 5.1]{Kirillov} considers fillings of shape $\mu\subseteq \delta_{n-1}$ that increase weakly along rows and strictly along columns and further satisfy the property that  entries in row $i$ belong to the interval $[i,n-1]$. 
These fillings can be transformed to our reverse flagged fillings by taking transposes and changing each entry $j$ to $n-j$. In view of this transformation,
our determinantal formula for reverse flagged fillings is the same as the determinant present in \cite[Theorem 5.6]{Kirillov}. 
\end{remark}

\begin{remark}
  The sum of the coefficients in \eqref{eq:lascoux.2} also give rise
  to an integer sequence $f(n)$ starting 3, 16, 147, 2304, 61347.
  This appears to be a new sequence in the literature
  \cite[A306397]{oeis}.  We can give this sequence the following
  combinatorial interpretation.  If we denote the number of 1s in a
  reverse flagged filling $T$ by $m_1(T)$, then this sum of
  coefficients can be written as
\begin{align}
f(n)=\sum_{\lambda\subseteq \delta_n}\sum_{T}2^{m_1(T)}.
\end{align}
Here the inner sum runs over all reverse flagged fillings $T$ of shape
$\lambda$.  
\end{remark}

\section{A $q$-analogue of $B_{n,n-1}$ and superspace}
\label{Analogue}

In this section, we give representation theoretic models for $B_{n,k}$ in the special case  $k = n-1$.
Introducing a parameter $q$, we consider
the $q$-analogue 
\begin{equation}
B_{n,n-1}(X_n; q) \coloneqq \prod_{i = 1}^n (x_1 + \cdots + x_n + q x_i).
\end{equation}
This specializes to $B_{n,n-1}(x_1, \dots, x_n; q)$ at $q = -1$.
Switching to infinitely many variables,
we also consider the symmetric function
\begin{equation}
B_{n,n-1}(X;q) \coloneqq \sum_{j = 0}^n q^j \cdot e_j(X) \cdot h_{(1^{n-j})}(X).
\end{equation}

\subsection{The specialization $q = 0$}
At $q = 0$, we have the representation theoretic interpretation
\begin{equation}
B_{n,n-1}(X_n; 0) = h_1(X_n)^n = \ch(\overbrace{\CC^n \otimes \cdots \otimes \CC^n}^n),
\end{equation}
where $GL_n$ acts diagonally on the tensor product.
The symmetric function $B_{n,n-1}(X;0) = h_{(1^n)}(X)$ is the Frobenius image $\Frob(\CC[\symm_n])$ of the regular
representation of $\symm_n$.

\subsection{The specialization $q = -1$}
The case $q = -1$ is more interesting.  The symmetric function
\begin{equation}
B_{n,n-1}(X;-1) = \sum_{j = 0}^n (-1)^j \cdot e_j(X) \cdot h_{(1^{n-j})}(X)
\end{equation}
was introduced under the name $D_n$ by D\'esarm\'enien and Wachs
\cite{DW} in their study of derangements in the symmetric group.
Reiner and Webb \cite{RW} described the Schur expansion of
$B_{n,n-1}(X;-1)$ in terms of ascents in tableaux. Recall that an {\em
  ascent} in a standard Young tableau $T$ with $n$ boxes is an index
$1 \leq i \leq n-1$ such that $i$ appears in a row weakly below $i+1$
in $T$.  Athanasiadis generalized the Reiner-Webb theorem in the
context of the $\symm_n$ representation on the homology of the poset
of injective words \cite{Athanasiadis}.

\begin{theorem}
  (Reiner-Webb \cite[Prop. 2.3]{RW}) For $n \geq 2$ we have
  $B_{n,n-1}(X;-1) = \sum_{\lambda \vdash n} a_{\lambda} s_{\lambda}$,
  where $a_{\lambda}$ is the number of standard tableaux of shape
  $\lambda$ with smallest ascent given by an even number.  Here we
  artificially consider $n$ to be an ascent so every tableau has at
  least one ascent.
\end{theorem}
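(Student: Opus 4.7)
The plan is to compute the coefficient of $s_\lambda$ in $B_{n,n-1}(X;-1) = \sum_{j=0}^n (-1)^j e_j h_1^{n-j}$ by iterated Pieri followed by a short telescoping sum. Writing $e_j = s_{(1^j)}$, repeated application of the Pieri rule $s_\mu \cdot h_1 = \sum_{\lambda = \mu + \square} s_\lambda$ to $s_{(1^j)} h_1^{n-j}$ expresses $[s_\lambda]\, e_j h_1^{n-j}$ as the number of saturated chains of Young diagrams from $(1^j)$ up to $\lambda$ that grow by a single cell at each step. Such a chain encodes a standard Young tableau $T$ of shape $\lambda$ in which the entries $1, 2, \dots, j$ are pinned to the first-column cells $(1,1), (2,1), \dots, (j,1)$, while $j+1, \dots, n$ fill the remaining cells in the order dictated by the chain. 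Define $c(T)$ to be the largest $j$ for which entries $1, 2, \dots, j$ occupy those first-column cells; then $c(T) \geq 1$ always, and
\[ [s_\lambda]\, e_j h_1^{n-j} = \#\{T \in \SYT(\lambda) : c(T) \geq j\}. \]

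Substituting into the alternating sum and interchanging the order of summation yields
\[ [s_\lambda]\, B_{n,n-1}(X;-1) = \sum_{T \in \SYT(\lambda)} \sum_{j=0}^{c(T)} (-1)^j = \#\{T \in \SYT(\lambda) : c(T) \text{ is even}\}, \]
since the inner alternating partial sum equals $1$ when $c(T)$ is even and $0$ when $c(T)$ is odd.

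It remains to identify $c(T)$ with the smallest ascent of $T$, where $n$ is included as an ascent by convention (this handles the column shape $\lambda = (1^n)$, for which $c(T) = n$ and $T$ has no ascents in $[n-1]$). If $1, \dots, k$ all sit in the first column, then $i+1$ lies strictly below $i$ for $i = 1, \dots, k-1$, so each such $i$ is a descent and the smallest ascent of $T$ is at least $k$. For the converse, I would induct on $k$: assuming $1, \dots, k-1$ are in the first column and that $k-1$ is a descent, entry $k$ must be placed at an addable corner of the column shape $(1^{k-1})$, namely $(1,2)$ or $(k,1)$, and the corner $(1,2)$ lies in row $1 < k$, which contradicts the descent at $k-1$; hence $k$ must occupy $(k,1)$. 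This identifies $c(T)$ with $\min(\Asc(T) \cup \{n\})$ and completes the proof. The main subtle step is this addable-corner induction pinning $c(T)$ down as the smallest-ascent statistic; the preceding Pieri-and-telescope computation is routine.
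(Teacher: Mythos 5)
The paper itself does not prove this statement; it is quoted from Reiner--Webb \cite[Prop.~2.3]{RW}, whose argument runs through the homology of the complex of injective words. Your proof is correct and, unlike that source, is purely combinatorial and self-contained, so it is worth recording the comparison. The key points in your argument all check out: since $e_j = s_{(1^j)}$, iterated Pieri gives $[s_\lambda]\, e_j h_1^{n-j} = \#\{T \in \SYT(\lambda) : c(T) \geq j\}$ where $c(T)$ is the largest $j$ with $1,\dots,j$ filling $(1,1),\dots,(j,1)$; interchanging the sums and telescoping $\sum_{j=0}^{c(T)}(-1)^j$ correctly isolates the $T$ with $c(T)$ even; and the identification $c(T) = \min(\Asc(T)\cup\{n\})$ holds by the two-sided argument you sketch. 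One direction ($a(T) \geq c(T)$) is immediate since $1,\dots,c(T)-1$ are forced descents. For the other direction, the addable-corner induction is exactly the right tool: if $1,\dots,j-1$ occupy the first $j-1$ cells of column one and $j-1$ is a descent, then among the two addable corners of $(1^{j-1})$, namely $(1,2)$ and $(j,1)$, only $(j,1)$ lies strictly below row $j-1$, forcing $j$ there. The edge cases $\lambda=(1^n)$ and $a(T)=n$ are handled precisely by the stated convention that $n$ counts as an ascent. What your approach buys is a short, entirely combinatorial route, with the one nontrivial insight being the statistic identification $c(T) = \min(\Asc(T)\cup\{n\})$; the Reiner--Webb route instead obtains the ascent description as a byproduct of computing top homology of a Cohen--Macaulay complex. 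Both are valid, and yours is a nice elementary alternative to have on record.
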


Gessel and Reutenauer discovered \cite[Thm. 3.6]{GR}
a relationship between $B_{n,n-1}(X_n; -1) = B_{n,n-1}(X_n)$
and free Lie algebras. Specifically, they proved
\begin{equation}
B_{n,n-1}(X_n; -1) = \sum_{\lambda} \ch(\mathrm{Lie}_{\lambda}(\CC^n)).
\end{equation}
The sum ranges over all partitions $\lambda \vdash n$ which have no
parts of size 1 and $\mathrm{Lie}_{\lambda}(\CC^n)$ is a
$GL_n$-representation called a {\em higher Lie module}
\cite{Reutenauer}.  Equivalently, if one expands $B_{n,n-1}(X_n; -1)$
into the basis of fundamental quasisymmetric functions, we have
\begin{equation}
B_{n,n-1}(X_n; -1) = \sum_{w \in D_n} F_{D(w)}
\end{equation}
where $D_n$ is the set of derangements in $\symm_n$ and
$D(w)=\{i: w(i)>w(i+1)\}$ is the descent set of $w$.

\subsection{The specialization $q = 1$}
At $q = 1$, the symmetric function $B_{n,n-1}(X;1)$ has an interpretation involving positroids.
A {\em positroid} of size $n$ is a 
length $n$ sequence $v_1 v_2 \dots v_n$ consisting of $j$ copies of the letter 0 (for some $0 \leq j \leq n$)
and one copy each of the  letters $1, 2, \dots, n-j$.
Let $P_n$ denote the set of positroids of size $n$.
For example, we have
\begin{equation*}
P_3 = \{ 123, 213, 132, 231, 312, 321, 012, 021, 102, 201, 120, 210, 001, 010, 100, 000 \}.
\end{equation*}
If we use a parameter $j$ to keep track of the number of  0s, we get
\begin{equation}
 |P_n| = \sum_{j = 0}^n \frac{n!}{j!}.
\end{equation}

A more common definition of positroids is permutations in $\symm_n$ with each fixed point colored white 
or black.
More explicitly, the 0's in $v = v_1 \dots v_n \in P_n$ correspond to the white fixed points and the remaining entries
of $v_1 \dots v_n$ are order-isomorphic to a unique permutation of the set $\{ 1 \leq i \leq n \,:\, v_i \neq 0 \}$;
the fixed points of this smaller permutation are colored black.
For example, if $v = 3020041 \in P_7$ then
\begin{equation*}
3020041 \leftrightarrow 6 2 3 4 5 7 1,  \quad \text{with white fixed points $2, 4, 5$ and black fixed point $3$}.
\end{equation*}
Positroids arise as an indexing set for Postnikov's cellular structure on the totally positive Grassmannian
\cite{Postnikov}.

Let $\CC[P_n]$ be the vector space of formal $\CC$-linear combinations of elements of $P_n$.
The symmetric group $\symm_n$ acts on $\CC[P_n]$ as follows. Let $1 \leq i \leq n-1$ and let 
$s_i = (i,i+1) \in \symm_n$ be the associated adjacent transposition.
If $v = v_1 \dots v_n \in P_n$, then we have
$s_i.v \coloneqq \pm v_1 \dots v_{i+1} v_i \dots v_n$ where the sign is $-$ if $v_i = v_{i+1} = 0$ and $+$ otherwise.
As an example, when $n = 4$ we have
\begin{equation*}
s_1.(2100) = 1200, \quad s_2.(2100) = 2010, \quad s_3.(2100) = -2100.
\end{equation*}
It can be checked that this rule satisfies the {\em braid relations}
\begin{equation}
\begin{cases}
s_i^2 = 1 & 1 \leq i \leq n-1 \\
s_i s_j = s_j s_i & |i - j| > 1 \\
s_i s_{i+1} s_i = s_{i+1} s_i s_{i+1} & 1 \leq i \leq n-2
\end{cases}
\end{equation}
and so extends to give an action of $\symm_n$ on $\CC[P_n]$.

\begin{proposition}
\label{positroid-proposition}
We have $\Frob( \CC[P_n] ) = B_{n,n-1}(X;1) = \sum_{j = 0}^n  e_j(X) \cdot h_{(1^{n-j})}(X)$.
\end{proposition}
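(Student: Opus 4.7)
The plan is to decompose $\CC[P_n]$ as an $\symm_n$-module by grouping positroids according to their number of zero entries, then realize each summand as an induced module. Since each simple reflection $s_i$ either sends $v$ to $\pm v$ or to the positroid obtained by transposing entries in positions $i, i+1$, the number of zero entries is preserved, so
\[
\CC[P_n] = \bigoplus_{j=0}^n V_j
\]
as $\symm_n$-modules, where $V_j$ is spanned by positroids with exactly $j$ zeros. It suffices to prove $\Frob(V_j) = e_j(X) \cdot h_{(1^{n-j})}(X)$ for each $j$.

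To exhibit each $V_j$ as an induced module, fix $S_0 = \{1,\ldots,j\}$ and let $V_j^{S_0} \subseteq V_j$ denote the span of those positroids whose zeros occupy exactly the positions in $S_0$; this subspace has dimension $(n-j)!$. The Young subgroup $H = \langle s_1,\ldots,s_{j-1}\rangle \times \langle s_{j+1},\ldots,s_{n-1}\rangle \cong \symm_j \times \symm_{n-j}$ stabilizes $V_j^{S_0}$: the generators in the first factor swap two zero entries and so act by $-1$, while those in the second factor freely permute the $(n-j)!$ arrangements of $\{1,\ldots,n-j\}$ in positions $j{+}1,\ldots,n$ with no sign. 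Consequently $V_j^{S_0}$, as an $H$-module, is isomorphic to the external tensor product of the sign representation of $\symm_j$ with the regular representation $\CC[\symm_{n-j}]$.

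Next, I would verify that each $w \in \symm_n$ sends $V_j^{S_0}$ to $V_j^{wS_0}$, that $w \cdot V_j^{S_0}$ depends only on the coset $wH$, and that $\bigoplus_{wH \in \symm_n / H} w \cdot V_j^{S_0}$ fills out $V_j$; the last point reduces to noting that every $j$-subset of $[n]$ has the form $wS_0$. A dimension count then gives $V_j \cong \Ind_H^{\symm_n} V_j^{S_0}$, and the induction-product formula for Frobenius characteristics (quoted earlier in the paper) yields
\[
\Frob(V_j) = e_j(X) \cdot h_{(1^{n-j})}(X).
\]
Summing over $j$ gives the claim.

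The main subtlety is the sign bookkeeping. One must check that the ``$-1$ when swapping two zeros'' rule produces a consistent $\symm_n$-action, which the braid relations already recorded in the excerpt handle, and, more importantly, that this sign convention is exactly what converts what would otherwise be a sign ambiguity between coset representatives into the sign character on the $\symm_j$ factor of $H$. This is the reason $e_j$, rather than $h_j$, appears in the Frobenius image.
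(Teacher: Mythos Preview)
Your proof is correct and follows essentially the same approach as the paper: decompose $\CC[P_n]$ by the number of zeros and identify each piece as the induction product $\sign_{\symm_j} \circ \CC[\symm_{n-j}]$. You spell out the induced-module identification more explicitly via the subspace $V_j^{S_0}$ and coset representatives, whereas the paper simply invokes the definition of induction product, but the argument is the same.
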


\begin{proof}
For $0 \leq j \leq n$, let $P_{n,j} \subseteq P_n$ be the family of positroids with $j$  copies of 0.
Since the action of $\symm_n$ on $P_n$ does not change the number of 0s, 
the vector space direct sum 
$\CC[P_n] \cong \bigoplus_{j = 0}^n \CC[P_{n,j}]$ is stable under the action of $\symm_n$.
Since
$\Frob(\CC[P_n]) = \sum_{j = 0}^n \Frob(\CC[P_{n,j}])$, it is enough to check that
$\Frob(\CC[P_{n,j}]) = e_j(X) \cdot h_{(1^{n-j})}(X)$.

By our choice of signs in the action of $\symm_n$ on $\CC[P_{n,j}]$ and the definition of induction
product, we see that 
\begin{equation}
\CC[P_{n,j}] \cong \sign_{\symm_j} \circ \CC[\symm_{n-j}]
\end{equation}
where $\sign_{\symm_j}$ is the $1$-dimensional sign representation 
$\symm_j$ and $\CC[\symm_{n-j}]$ is the regular representation of $\symm_{n-j}$, so 
that 
\begin{equation}
\Frob(\CC[P_{n,j}]) = \Frob(\sign_{\symm_j}) \cdot \Frob(\CC[\symm_{n-j}]) = e_j(X) \cdot h_{(1^{n-j})}(X),
\end{equation}
as desired.
\end{proof}

We present a graded refinement of the module in Proposition~\ref{positroid-proposition}
in the next subsection.

\subsection{General $q$ and superspace quotients}  
We want to describe a graded $\symm_n$-module
whose graded Frobenius image equals $B_{n,n-1}(X;q)$.
This module will be a quotient of superspace.

For $n \geq 0$,
{\em superspace} 
 is the associative unital $\CC$-algebra with generators $x_1, \dots, x_n, \theta_1, \dots, \theta_n$
subject to the relations 
\begin{equation}
x_i x_j = x_j x_i, \quad x_i \theta_j = \theta_j x_i, \quad \theta_i \theta_j = - \theta_j \theta_i
\end{equation}
for all $1 \leq i, j \leq n$.
We write $\CC[x_1, \dots, x_n, \theta_1, \dots, \theta_n]$ for this algebra, with the understanding that 
the $x$-variables commute and the $\theta$-variables anticommute.
We can think of this as the ring of polynomial-valued differential forms on $\CC^n$.
In physics, the $x$-variables are called {\em bosonic} and the $\theta$-variables are called {\em fermionic}.
The symmetric group $\symm_n$ acts on superspace diagonally by the rule
\begin{equation}
w.x_i \coloneqq x_{w(i)}, \quad w.\theta_i \coloneqq \theta_{w(i)}, \quad w \in \symm_n, \, \, 1 \leq i \leq n.
\end{equation}

We define the {\em divergence free} quotient $DF_n$ of superspace by 
\begin{equation}
DF_n \coloneqq \CC[x_1, \dots, x_n, \theta_1, \dots, \theta_n]/ \langle x_1 \theta_1, x_2 \theta_2, \dots, x_n \theta_n \rangle.
\end{equation}
Here we think of superspace in terms of differential forms, so that
$x_i \theta_i$ is a typical contributor to the divergence of a vector
field.  The ideal defining $DF_n$ is $\symm_n$-stable and
bihomogeneous in the $x$-variables and the $\theta$-variables, so that
$DF_n$ is a bigraded $\symm_n$-module.  We use variables $t$ to keep
track of $x$-degree and $q$ to keep track of $\theta$-degree.

What is the bigraded Frobenius image $\grFrob(DF_n; t,q)$?  
For any subset $J = \{ j_1 < \cdots < j_k \} \subseteq [n]$, let 
$\theta_J \coloneqq \theta_{j_1} \cdots \theta_{j_k}$ be the corresponding product of $\theta$-variables in increasing
order.  Also let $\CC[X_J]$ be the polynomial ring over $\CC$ in the variables $\{ x_j \,:\, j \in J \}$ with indices
in $J$, so that $\CC[X_{[n]-J}]$ is the polynomial ring with variables whose indices do {\em not} lie in $J$. 
We have a vector space direct sum decomposition
\begin{equation}
\label{divergence-free-direct-sum}
DF_n = \bigoplus_{J \subseteq [n]} \CC[X_{[n]-J}] \cdot \theta_J.
\end{equation}

Let $DF_J \coloneqq \CC[X_{[n] - J}] \cdot \theta_J$ be the summand in
\eqref{divergence-free-direct-sum} corresponding to $J$.  The spaces
$DF_J$ are not closed under the action of $\symm_n$ unless
$J = \varnothing$ or $J = [n]$.  To fix this, for $0 \leq j \leq n$ we
set $DF_{n,j} \coloneqq \bigoplus_{|J| = j} DF_J$.  By
\eqref{divergence-free-direct-sum} we have
$DF_n = \bigoplus_{j = 0}^n DF_{n,j}$.  
Recall the plethystic formula for the graded Frobenius image of the polynomial ring:
\begin{equation}
\label{polynomial-graded-frobenius}
\grFrob(\CC[X_n];t) = h_n \left[ \frac{X}{1-t} \right].
\end{equation}
By the definition of induction
product and Equation~\eqref{polynomial-graded-frobenius} we have
\begin{align}
\grFrob(DF_{n,j}; q,t) &= q^j \cdot e_j(X) \cdot h_{n-j} \left[ \frac{X}{1-t} \right], \\
\grFrob(DF_{n}; q,t) &= \sum_{j = 0}^n q^j \cdot e_j(X) \cdot h_{n-j} \left[ \frac{X}{1-t} \right].
\end{align}

Let $I_n = \langle e_1(X_n), e_2(X_n), \dots, e_n(X_n) \rangle \subseteq DF_n$ be the ideal 
generated by the $n$ elementary symmetric polynomials
in the $x$-variables.  Equivalently, we can think of $I_n$ as the ideal generated by the vector space
$\CC[X_n]^{\symm_n}_+$ of symmetric polynomials with vanishing constant term within the divergence
free quotient of superspace.
Let $R_n \coloneqq DF_n/I_n$ be the corresponding bigraded quotient $\symm_n$-module.

\begin{theorem}
\label{bigraded-frobenius-identification}
The bigraded Frobenius image of $R_n$ is
\begin{equation}
\label{bigraded-frobenius-image}
\grFrob(R_n; q,t) = \sum_{j = 0}^n q^j \cdot e_j(X) \cdot 
\left[ \sum_{T \in \SYT(n-j)} t^{\maj(T)} \cdot s_{\shape(T)}(X) \right],
\end{equation}
where the sum is over all standard Young tableaux $T$ with $n-j$ boxes.
Consequently, we have
\begin{equation}
\grFrob(R_n; q,1) = \sum_{j = 0}^n q^j \cdot e_j(X) \cdot h_{(1^{n-j})}(X) = B_{n,n-1}(X;q).
\end{equation}
\end{theorem}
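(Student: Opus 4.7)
The plan is to exploit the direct sum decomposition $DF_n=\bigoplus_{J\subseteq[n]}DF_J$ from Equation~\eqref{divergence-free-direct-sum} and track how the ideal $I_n$ restricts to each summand. The key computation is that for any $J\subseteq[n]$ with $|J|=j$ and any $1\leq k\leq n$,
\begin{equation*}
e_k(X_n)\cdot\theta_J \;=\; e_k(X_{[n]-J})\cdot\theta_J \quad\text{in } DF_n,
\end{equation*}
because every monomial of $e_k(X_n)$ that involves some $x_i$ with $i\in J$ annihilates $\theta_J$ via the relation $x_i\theta_i=0$. Since the $x$-variables commute with themselves and with the $\theta$'s, the image of $I_n$ inside $DF_J$ is exactly the $\CC[X_{[n]-J}]$-ideal generated by $e_1(X_{[n]-J}),\dots,e_{n-j}(X_{[n]-J})$ (higher $e_k$'s vanish in $n-j$ variables), acting on the $\CC[X_{[n]-J}]$-module $DF_J=\CC[X_{[n]-J}]\cdot\theta_J$.

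With this in hand, I would identify, as bigraded vector spaces,
\begin{equation*}
DF_J\,\big/\,(I_n\cdot DF_J) \;\cong\; \bigl(\CC[X_{[n]-J}]\big/\langle e_1(X_{[n]-J}),\dots,e_{n-j}(X_{[n]-J})\rangle\bigr)\cdot\theta_J,
\end{equation*}
where $x$-degree is tracked by $t$ and $\theta_J$ contributes $q^j$. Summing over $J$ of fixed size $j$, the subgroup of $\symm_n$ stabilizing a chosen $J_0$ is $\symm_{[n]-J_0}\times\symm_{J_0}$; it acts on the polynomial factor via the standard $\symm_{n-j}$-action on the coinvariant algebra in the complementary variables, and on $\theta_{J_0}$ via the sign character, since permuting anticommuting $\theta$'s picks up $\sign(w)$. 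Therefore
\begin{equation*}
R_{n,j} \;\coloneqq\; \bigoplus_{|J|=j} DF_J/(I_n\cdot DF_J) \;\cong\; \bigl(\CC[X_{n-j}]\big/\langle e_1(X_{n-j}),\dots,e_{n-j}(X_{n-j})\rangle\bigr)\circ \sign_{\symm_j},
\end{equation*}
an induction product of graded $\symm_n$-modules (with the $\theta$-degree recorded by $q^j$).

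To finish, I would apply the multiplicativity $\Frob(V\circ W)=\Frob(V)\cdot\Frob(W)$ together with $\Frob(\sign_{\symm_j})=e_j(X)$ and the Lusztig--Stanley identity (Theorem~\ref{thm:LusztigStanley}) to obtain
\begin{equation*}
\grFrob(R_{n,j};t) \;=\; e_j(X)\cdot\sum_{T\in\SYT(n-j)} t^{\maj(T)}\, s_{\shape(T)}(X),
\end{equation*}
and then weight by $q^j$ and sum over $j$ to recover~\eqref{bigraded-frobenius-image}. Setting $t=1$, the inner sum collapses to $\sum_{\lambda\vdash n-j} f^{\lambda}\, s_\lambda(X)=h_{(1^{n-j})}(X)$, the Frobenius image of the regular representation of $\symm_{n-j}$, which combined with the definition of $B_{n,n-1}(X;q)$ gives the second claim. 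The main obstacle is the first step: one must verify carefully that $I_n$ preserves the $J$-decomposition and that each $e_k(X_n)$ restricts on $DF_J$ to $e_k(X_{[n]-J})$, since everything downstream is a clean assembly of Lusztig--Stanley with the induction-product formula for Frobenius.
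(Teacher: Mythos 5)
Your proposal is correct and follows essentially the same route as the paper: decompose $DF_n=\bigoplus_J DF_J$, use $x_i\theta_i=0$ to show $e_k(X_n)\theta_J=e_k(X_{[n]-J})\theta_J$ (the paper packages this via maps $\varphi_J\colon f\mapsto f\theta_J$ and the identity $I_n\cap DF_J=\varphi_J(I_n')$), recognize $\bigoplus_{|J|=j}DF_J/(I_n\cap DF_J)$ as the induction product of the coinvariant algebra in $n-j$ variables with the sign representation of $\symm_j$, and finish with Lusztig--Stanley and multiplicativity of $\Frob$ under induction product. The step you flagged as the main obstacle is handled in the paper exactly by the argument you sketch, so there is no gap.
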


In the case $n = 3$, the standard tableaux with $\leq 3$ boxes are as follows:
\begin{equation*}
\varnothing \hspace{0.2in}
\begin{young} 1 \end{young} \hspace{0.2in}
\begin{young} 1 & 2 \end{young} \hspace{0.2in}
\begin{young} 1 \cr 2 \end{young} \hspace{0.2in}
\begin{young} 1 & 2 & 3 \end{young} \hspace{0.2in}
\begin{young} 1 & 2 \cr 3 \end{young} \hspace{0.2in}
\begin{young} 1 & 3 \cr 2 \end{young} \hspace{0.2in}
\begin{young} 1 \cr 2 \cr 3 \end{young}
\end{equation*}
From left to right, their major indices are $0, 0, 0, 1, 0, 2, 1, 3$. By 
Equation~\eqref{bigraded-frobenius-image},
\begin{equation*}
\grFrob(R_n;q,t) = q^3 e_3 + q^2 e_2 s_1 + q e_1 s_{2} + qt e_1 s_{11} 
+ s_3 + t s_{21} + t^2 s_{21} + t^3 s_{111}.
\end{equation*}

\begin{proof}
We can relate $DF_{\varnothing} = \CC[X_n]$ to the other $DF_J$ using maps. Specifically, define
\begin{equation}
\varphi_J : DF_{\varnothing} \rightarrow DF_J
\end{equation}
by the rule $\varphi_J(f(X_n)) = f(X_n) \theta_J$.  Each $\varphi_J$
is a map of $\CC[X_n]$-modules.

Let $I_n' \subseteq \CC[X_n]$ be the classical {\em invariant ideal}
$I_n' \coloneqq \langle e_1(X_n), e_2(X_n), \dots, e_n(X_n) \rangle$ which has the same
generating set as $I_n$, but is generated in the subring $\CC[X_n] \subseteq DF_n$. 
The ideals $I_n$ and $I_n'$ may be related as follows: for any subset $J \subseteq [n]$,
\begin{equation}
I_n \cap DF_J = \varphi_J(I_n')
\end{equation}
and furthermore there hold the graded vector space decompositions
\begin{equation}
I_n = \bigoplus_{J \subseteq [n]} (I_n \cap DF_J) = \bigoplus_{J \subseteq [n]} \varphi_J(I_n').
\end{equation}
Taking quotients, this gives
\begin{equation}
R_n = \bigoplus_{J \subseteq [n]} DF_J/(I_n \cap DF_J) = 
\bigoplus_{J \subseteq [n]} DF_J/\varphi_J(I_n').
\end{equation}

What does the quotient $DF_J/\varphi_J(I_n')$ look like?  Since $x_i  \theta_i = 0$ in
$DF_n$ for all $i$, for any $1 \leq k \leq n$ we have
\begin{equation}
\varphi_J(e_k(X_n)) = e_k(X_{[n] - J}) \cdot \theta_J,
\end{equation}
where $e_k(X_{[n] - J})$ is the degree $k$ elementary 
symmetric polynomial in the variable set indexed by $[n] - J$; observe that 
$e_k(X_{[n]-J}) = 0$ whenever $k > n-|J|$.
Since $\varphi_J$ is a map of $\CC[X_n]$-modules, the polynomials
$e_k(X_{[n] - J})$ for $0 < k \leq n-|J|$ form a generating set for the 
$\CC[X_n]$-module $\varphi_J(I_n')$.  Consequently, the map 
\begin{equation}
\CC[X_{[n]-J}]/\langle e_k(X_{[n] - J}) \,:\, 0 < k \leq [n]-J \rangle 
\xrightarrow{\, \, \, \, \,  \cdot \, \theta_J \, \, \,\, \,  }
DF_J/\varphi_J(I_n')
\end{equation}
given by multiplication by $\theta_J$ is a 
$\CC$-linear isomorphism which
maps a homogeneous polynomial of bidegree
$(q,t)$ to  $(q+j,t)$. 

The reasoning of the last paragraph shows that $R_n$ admits a direct sum decomposition
as a bigraded vector space:
\begin{equation}
R_n \cong 
\bigoplus_{J \subseteq [n]}
\CC[X_{[n]-J}]/\langle e_k(X_{[n] - J}) \,:\, 0 < k \leq n - |J| \rangle \otimes
\CC \{ \theta_J \},
\end{equation}
where $\CC \{ \theta_J \}$ is the $1$-dimensional $\CC$-vector space
spanned by $\theta_J$.
This may also be expressed with induction product as
a bigraded $\symm_n$-module isomorphism
\begin{equation}
R_n \cong \bigoplus_{j = 0}^n
\CC[X_{n-j}]/I'_{n-j} \circ
\CC \{ \theta_1 \theta_2 \cdots \theta_j \}.
\end{equation}
Since $\CC \{ \theta_1 \theta_2 \dots \theta_j \}$ carries the sign representation of $\symm_j$ in $q$-degree $j$
and $\CC[X_{n-j}]/I'_{n-j}$ is the classical coinvariant ring corresponding to $\symm_{n-j}$,
the claimed Frobenius image now follows from Theorem~\ref{thm:LusztigStanley}.
\end{proof}

\begin{remark} For $n > 0$,
Reiner and Webb \cite{RW} consider a chain complex  
\begin{equation*}
C_{\bullet} = ( \,  \cdots \rightarrow C_2 \rightarrow C_1 \rightarrow C_0 \rightarrow 0 \, )
\end{equation*}
whose 
degree $j$ component $C_j$ has basis given by length $j$ words $a_1 \dots a_j$
over the alphabet $[n]$ with no repeated letters. 
Up to a sign twist, the polynomial
\begin{equation}
\sum_{j = 0}^n q^j \cdot \Frob(C_j)
\end{equation}
coming from the graded action of $\symm_n$ on $C_{\bullet}$ (without taking homology)
equals $B_{n,n-1}(X;q)$.  At $q = 1$ (again up to a sign twist) we get the action of 
$\symm_n$ on the space $\CC[P_n]$ spanned by positroids of 
Proposition~\ref{positroid-proposition}.
\end{remark}

The key fact in the proof of Theorem~\ref{bigraded-frobenius-identification}
was that a $\CC[X_n]$-module generating set for $I_n \cap DF_J$
can be obtained by applying the map $\varphi_J$ to the generators of the $\CC[X_n]$-module
$I_n \cap DF_{\varnothing}$, or equivalently the generators of the ideal $I_n' \subseteq \CC[X_n]$.
Since the images of these generators under $\varphi_J$ have a nice form, it was possible 
to describe the $J$-component $DF_n/\varphi_J(I'_n) = DF_n/(I_n \cap DF_J)$ of $R_n$.

The program of the above paragraph can be carried out for a wider class of ideals.
For $r \leq k \leq n$, consider the ideal $I_{n,k,r} \subseteq DF_n$ with generators
\begin{equation}
I_{n,k,r} \coloneqq \langle x_1^k, x_2^k, \dots, x_n^k, e_n(X_n), e_{n-1}(X_n), \dots, e_{n-r+1}(X_n) \rangle.
\end{equation}
The ideal
\begin{equation}
I'_{n,k,r} \coloneqq I_{n,k,r} \cap DF_{\varnothing} = I_{n,k,r} \cap \CC[X_n]
\end{equation}
was defined by Haglund, Rhoades, and Shimozono and gives a variant of the coinvariant
ring whose properties are governed by ordered set partitions \cite{HRS}.
Pawlowski and Rhoades proved that the quotient of $\CC[X_n]$ by $I'_{n,k,r}$
and presents the cohomology of a certain variety of line configurations \cite{PR}
(denoted $X_{n,k,r}$ therein).

Let $R_{n,k,r} \coloneqq DF_n/I_{n,k,r}$ be the quotient of $DF_n$ by $I_{n,k,r}$. The same reasoning
as in the proof of Theorem~\ref{bigraded-frobenius-identification} gives
\begin{equation}
R_{n,k,r} \cong \bigoplus_{j = 0}^n
\CC[X_{n-j}]/I'_{n-j,k,r-j} \circ
\CC \{ \theta_1 \theta_2 \cdots \theta_j \},
\end{equation}
so that
\begin{equation}
\label{osp-frobenius}
\grFrob(R_{n,k,r};q,t) = \sum_{j = 0}^n q^j \cdot e_j(X) \cdot \grFrob(\CC[X_{n-j}]/I'_{n-j,k,r-j};t).
\end{equation}

The Schur expansion of the symmetric function \eqref{osp-frobenius} follows from 
material in \cite[Sec. 6]{HRS}.
Each term on the right-hand-side of Equation~\eqref{osp-frobenius} has the form
 $\grFrob(\CC[X_{n}]/I'_{n,k,r};t)$ for some $r \leq k \leq n$.
 Combining \cite[Lem. 6.10]{HRS} and \cite[Cor. 6.13]{HRS} we have
\begin{multline}
\grFrob(\CC[X_{n}]/I'_{n,k,r};t) = \\ \sum_{m = 0}^{k - r}  t^{m \cdot (n-k+m)} {k - r \brack m}_t
\left( \sum_{T \in \SYT(n)} t^{\maj(T)} {n - \des(T) - 1 \brack n-k+m}_t 
s_{\shape(T)}(X) \right).
\end{multline}
Here we adopt the $t$-binomial notation
\begin{equation}
{n \brack k}_t := \frac{[n]_t!}{[k]!_t [n-k]!_t}, \quad 
[n]!_t := [n]_t [n-1]_t \cdots [1]_t, \quad
[n]_t := 1 + t + \cdots + t^{n-1}.
\end{equation}

\section{Conclusion}
\label{Open}

As an extension of Problem~\ref{gl-module-problem}, one could ask for a module whose Weyl character 
is given by the expression in Pragacz's Theorem~\ref{pragacz-theorem}.
For simplicity, let us consider the case of one rank $n$ vector bundle $\EEE$ with Chern roots $x_1, \dots, x_n$
and the Chern plethysm $s_{\lambda}(\SSS^{\mu}(\EEE))$ for two partitions $\lambda$ and $\mu$.
If $W$ is a $GL_n$-module with Weyl character $s_{\lambda}(\SSS^{\mu}(\EEE))$, then 
\begin{equation}
\dim W = s_{\lambda}(\SSS^{\mu}(\EEE)) \mid_{x_1 = \cdots = x_n = 1} =
|\mu|^{|\lambda|} \cdot |\SSYT( \lambda, \leq |\SSYT(\mu, \leq n)|) |,
\end{equation}
where the second equality uses the fact that $\SSS^{\mu}(\EEE)$ has Chern roots 
$\sum_{\square \in \mu} x_{T(\square)}$ where $T$ ranges over all elements of $\SSYT(\mu, \leq n)$.

The quantity $|\SSYT( \lambda, \leq |\SSYT(\mu, \leq n)|) |$ has a natural representation theoretic interpretation
via Schur functor composition:
\begin{equation}
\dim \SSS^{\lambda}(\SSS^{\mu}(\CC^n)) = |\SSYT( \lambda, \leq |\SSYT(\mu, \leq n)|) |.
\end{equation}
This suggests the following problem.

\begin{problem}
\label{general-pragacz-problem}
Let $W$ be the vector space 
$\mathrm{Hom}_{\CC}( \SSS^{\lambda}(\SSS^{\mu}(\CC^n)), (\CC^{|\mu|})^{\otimes |\lambda|})$.
Find an action of $GL_n$ on $W$ whose Weyl character equals $s_{\lambda}(\SSS^{\mu}(\EEE))$.
\end{problem}

The natural $GL_n$-action on $W$ coming from acting on $\CC^n$ does not 
solve Problem~\ref{general-pragacz-problem}. Indeed, this is a polynomial
representation of $GL_n$ of degree $|\lambda| \cdot |\mu|$
whereas the polynomial $s_{\lambda}(\SSS^{\mu}(\EEE))$ has degree $|\lambda|$.
We remark that the Weyl character of the $GL_n$-action on $\SSS^{\lambda}(\SSS^{\mu}(\CC^n))$
coming from the action of $GL_n$ on $\CC^n$ is the classical plethysm
$s_{\lambda}[s_{\mu}]$.
Problem~\ref{general-pragacz-problem} asks for the corresponding representation theoretic 
operation for Chern plethysm.


We close with some connections between Boolean product polynomials 
and the theory of maximal unbalanced collections.
Let $\ee_1, \ee_2, \dots, \ee_n$ be the standard basis of $\RR^n$ and
for $S \subseteq [n]$, let $\ee_S := \sum_{i \in S} \ee_i$ be the sum of the coordinate
vectors in $S$.
A collection of subsets $\CCC \subseteq 2^{[n]}$ is called {\em
    balanced} if the convex hull of the vectors $\ee_S$ for $S \in \CCC$
  meets the main diagonal
  $\{ (t, t, \dots, t) \,:\, 0 \leq t \leq 1 \}$ in
  $[0,1]^n$. Otherwise, the collection $\CCC$ is {\em
    unbalanced}. 
    
   Balanced collections were defined by Shapley
  \cite{Shapley} in his study of $n$-person cooperative games.  In the
  containment partial order on $2^{[n]}$, balanced collections form an
  order filter and unbalanced collections form an order ideal, so we
  can consider {\em minimal balanced} and {\em maximal unbalanced}
  collections.  Minimal balanced collections were considered by
  Shapley \cite{Shapley} and maximal unbalanced collections arose
  independently in the work of Billera-Moore-Moraites-Wang-Williams
  \cite{BMMWW} and Bj\"orner \cite{Bjorner}.  In particular, Billera
  et. al. gave a bijection between maximal unbalanced collections and
  the regions of the resonance arrangement \cite{BMMWW}.  Thus,
  counting maximal unbalanced collections is equivalent to counting
  the chambers in the resonance arrangement defined by the polynomial $B_n(X_n)$.
  
  One way to count the chambers of the resonance arrangement would be to 
  find the characteristic polynomial of the matroid $M_n = \{ \ee_S \,:\, \varnothing \neq S \subseteq [n] \}$.
  To understand the matroid $M_n$, one would need to know whether the determinant $\det A$ 
  of any $0,1$-matrix $A$ of size $n \times n$ is zero or not.
  Determinants of $0,1$-matrices arise  \cite{wiki:Hadamard} in {\em Hadamard's maximal determinant problem},
  which asks whether there exists an $n \times n$ $0,1$-matrix $A$ such that 
   $\det A = (n+1)^{(n+1)/2}/2^{n}$ (the inequality $\leq$ is known to hold for any matrix $A$).
   The study of the matroid $M_n$ could shed light on the Hadamard problem.

\section{Acknowledgments}
\label{Acknowledgements}

This project started at BIRS at the Algebraic Combinatorics workshop
in August 2015; the authors thank Lou Billera for the initial
inspiration for pursuing this project at that workshop and the
collaboration leading to \cite{BBT}.  The authors are also grateful to
Christos Athanasiadis, Patricia Hersh, Steve Mitchell, Jair Taylor,
and Alex Woo for helpful conversations.  We would also like to thank
Matja\v{z} Konvalinka and Philippe Nadeau for enlightening discussions
regarding the proof of Theorem~\ref{lascoux-schur-expansion}.
S. Billey was partially supported by NSF Grants DMS-1101017 and
DMS-1764012.  B. Rhoades was partially supported by NSF Grant
DMS-1500838.

\end{document}